\definecolor{darkblue}{rgb}{0, 0, .4}
\definecolor{grey}{rgb}{.7, .7, .7}
  \newcommand{\href}[2]{#2}
  \newcommand{\url}[2]{#2}
\newtheorem{theorem}{Theorem}[section]
\newtheorem{lemma}[theorem]{Lemma}
\theoremstyle{definition}
\newtheorem{definition}[theorem]{Definition}
\newtheorem{example}[theorem]{Example}
\theoremstyle{remark}
\numberwithin{equation}{section}
\theoremstyle{theorem}
\newtheorem{corollary}[theorem]{Corollary}
\newtheorem{algorithm}[theorem]{Algorithm}
\newtheorem{proposition}[theorem]{Proposition}
\newcommand{\I}{\underline{\mathcal{I}}}
\newcommand{\TB}{\underline{\mathcal{B}}}
\newcommand{\T}{\mathcal{T}}
\renewcommand{\S}{\mathcal{S}}
\newcommand{\B}{\mathcal{B}}
\renewcommand{\c}{\circ}
\newcommand{\pf}[1]{\pi|_{[#1]}}
\newcommand{\C}[0]{\mathcal{C}}
\newcommand{\pc}[1]{{\bf \bar{#1}}}
\newcommand{\po}[1]{{\mathrm{#1}}}
\begin{document}

\title{Avoiding patterns and making the best choice}

\begin{abstract}
We study a variation of the game of best choice (also known as the secretary problem or game of googol) under an additional assumption that the ranks of interview candidates are restricted using permutation pattern-avoidance.  We develop some general machinery for investigating interview orderings with a non-uniform rank distribution, and give a complete description of the optimal strategies for the pattern-avoiding games under each of the size three permutations.  The optimal strategy for the ``disappointment-free'' (i.e.  $321$-avoiding) interviews has a form that seems to be new, involving thresholds based on value-saturated left-to-right maxima in the permutation.
\end{abstract}

\author{Brant Jones}
\address{Department of Mathematics and Statistics, MSC 1911, James Madison University, Harrisonburg, VA 22807}
\email{\href{mailto:jones3bc@jmu.edu}{\texttt{jones3bc@jmu.edu}}}
\urladdr{\url{http://educ.jmu.edu/\~jones3bc/}}


\date{\today}

\maketitle

\bigskip
\section{Introduction}\label{s:intro}

The game of best choice has been considered under many different names by
researchers with a wide variety of perspectives.  In the classical story, a
player conducts {\bf interviews} with a fixed number $N$ of {\bf candidates}.
After each interview, the player ranks the current candidate against all of the
candidates that have previously been considered (without ties).  The
interviewer must then decide whether to {\bf accept} the current candidate and
end the game or, alternatively, whether to {\bf reject} the current candidate
forever and continue playing in the hope of obtaining a better candidate in the
future.  (These rules model a ``tight'' market in which each candidate has many
options for employment and will not be available to be recalled for a second
interview later in the game.)

Various facets and extensions of this model can be investigated but the most
developed line of inquiry is to describe a strategy that maximizes the player's
chance of {\em hiring the candidate ranked best among the $N$ candidates}.
Notably, the classical analysis of this game assumes that all $N!$
orderings of rankings into interviews are equally likely.  It turns out that the
form of the optimal strategy is to reject an initial set of candidates and use
them as a training set by hiring the next candidate who is better than all of
them (or the last candidate if no subsequent candidate is better).  The question
then becomes when to make the transition from rejection to hiring: If the
training set is small, it is likely that our standards will be set too low
to capture the best candidate; if it is large, it is likely that it already
contains the best candidate who will be interviewed and rejected.  After some
analysis, it turns out that the asymptotically optimal transition point is after
we have rejected the first $1/e \approx 37\%$ of candidates; the probability of
success with this strategy is also $1/e$ as $N$ tends to infinity.

Although this analysis is mathematically pithy, involving the constant $e$ in a
surprising way, we believe the assumption that all $N!$ interview orderings are
equally likely is ultimately unrealistic.  Over the period that the player is
conducting the $N$ interviews, there can be both extrinsic trends in the
candidate pool as well as intrinsic learning on the part of the interviewer.
As the interviewer ranks the current candidates at each step, they aquire
information about the domain that should allow them to hone the pool to include
more relevant candidates at future time steps.  Overall, this results in
candidate ranks that are improving over time (rather than uniform).  We
establish in this paper two new models that produce interesting behavior in
this direction.

To describe them, we employ pattern-avoidance in order to restrict the
interview orderings.  This is a natural technique from the viewpoint of
algebraic combinatorics although its application for the best choice game seems
to be new.  Throughout this paper, we model interview orderings as {\bf
permutations}.  The permutation $\pi$ of $N$ is expressed in {\bf one-line
notation} as $[\pi_1 \pi_2 \cdots \pi_N]$ where the $\pi_i$ consist of the
elements $1, 2, \ldots, N$ (so each element appears exactly once).  In the best
choice game, $\pi_i$ is the rank of the $i$th candidate interviewed {\em in
reality}, where rank $N$ is best and $1$ is worst.  What the player sees at
each step, however, are {\em relative} rankings.  For example, corresponding to
the interview order $\pi = [2516374]$, the player sees the sequence of
permutations \[ 1, 12, 231, 2314, 24153, 241536, 2516374 \] and must use only
this information to determine at which step to transition.  The {\bf
left-to-right maxima} of $\pi$ consist of the elements $\pi_i$ that are larger
in value than all elements $\pi_j$ lying to the left.  For example, in the
interview order $\pi = [2516374]$ the best candidate occurs in the sixth
position and it is not difficult to see that we will successfully hire them if
and only if we transition from rejection to hiring between the last two
left-to-right maxima (i.e. after the fourth or fifth interview).

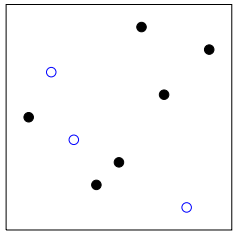
\begin{figure}[h]
\[ \scalebox{0.3}{ \begin{tikzpicture}
		\draw (0,0) -- (10,0) -- (10,10) -- (0,10) -- (0,0);
		\draw (1,5) node [circle, fill=black, inner sep=0pt, minimum size=12pt, draw] {} ;
		\draw (2,7) node [circle, inner sep=0pt, minimum size=12pt, blue, draw] {} ;
		\draw (3,4) node [circle, inner sep=0pt, minimum size=12pt, blue, draw] {} ;
		\draw (4,2) node [circle, fill=black, inner sep=0pt, minimum size=12pt, draw] {} ;
		\draw (5,3) node [circle, fill=black, inner sep=0pt, minimum size=12pt, draw] {} ;
		\draw (6,9) node [circle, fill=black, inner sep=0pt, minimum size=12pt, draw] {} ;
		\draw (7,6) node [circle, fill=black, inner sep=0pt, minimum size=12pt, draw] {} ;
		\draw (8,1) node [circle, inner sep=0pt, minimum size=12pt, blue, draw] {} ;
		\draw (9,8) node [circle, fill=black, inner sep=0pt, minimum size=12pt, draw] {} ;
\end{tikzpicture} }\]
\caption{$\pi = [574239618]$} \label{f:ex}
\end{figure}

Given a permutation $\pi$ of $N$ and a permutation $\rho$ of $M \leq N$, we say
that $\pi$ {\bf contains $\rho$ as a pattern} if there exists a subsequence
$\pi_{i_1} \pi_{i_2} \cdots \pi_{i_M}$ of entries from $\pi$ in the same
relative order as the entries $\rho_1 \rho_2 \ldots \rho_M$ of $\rho$.
Otherwise, we say $\pi$ {\bf avoids} $\rho$.  This has a simple geometric
interpretation when we plot our permutations graphically by placing a point
$(i,j)$ in the Cartesian plane to represent $\pi_i = j$.  In Figure~\ref{f:ex},
we show the plot of $[574239618]$ and have highlighted a $321$-instance in
positions $2, 3$, and $8$.  As the example illustrates, neither the positions
nor the values in a pattern instance need to be consecutive.  This permutation
avoids $54321$.

We call the best choice game {\bf restricted} if we play on some subset of the
$N!$ interview orderings to obtain a distribution with, for simplicity,
probability zero on interview orderings that are not in the subset and uniform
probability on the orderings that are in the subset.  The choice of restriction
criteria represents the modeler's beliefs about the overall effect of the player
learning process on the interview orderings.  However, the player does not
directly impose these restrictions from within the game.

In our first model, we avoid the permutation $231$ which requires a sequence of
interviews to be {\bf status-seeking} in the sense that each time we have an
interview that is an improvement over some previous interview, a floor is set 
for all future candidate rankings.  Less drastically, one could imagine
avoiding $[2 3 \cdots (k-1) k 1] $ for various values of $k$ (tending towards
the classical game as $k \rightarrow \infty$), but the case we consider is the
strongest nontrivial model in this direction.  In a variation, avoiding $321$
requires interviews to be {\bf disappointment-free} in the sense that each time
we have an interview that is worse than some previous interview, the new
interview becomes a floor for all future candidate rankings.  These are
illustrated in Figure~\ref{f:strats}.  As above, one could extend this to
consider a family of patterns of the form $[k (k-1) \cdots 2 1]$ for various
$k$.

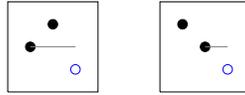
\begin{figure}[h]
\[ 
    \scalebox{0.3}{ \begin{tikzpicture}
			\draw (0,0) -- (4,0) -- (4,4) -- (0,4) -- (0,0);
			\draw (1,2) node [circle, fill=black, inner sep=0pt, minimum size=12pt, draw] {} ;
			\draw (2,3) node [circle, fill=black, inner sep=0pt, minimum size=12pt, draw] {} ;
			\draw (3,1) node [circle, inner sep=0pt, minimum size=12pt, blue, draw] {} ;
			\draw [gray] (1,2) -- (3,2);
			\end{tikzpicture} } 
\hspace{0.3in}
		\scalebox{0.3}{ \begin{tikzpicture}
			\draw (0,0) -- (4,0) -- (4,4) -- (0,4) -- (0,0);
			\draw (1,3) node [circle, fill=black, inner sep=0pt, minimum size=12pt, draw] {} ;
			\draw (2,2) node [circle, fill=black, inner sep=0pt, minimum size=12pt, draw] {} ;
			\draw (3,1) node [circle, inner sep=0pt, minimum size=12pt, blue, draw] {} ;
			\draw [gray] (2,2) -- (3,2);
\end{tikzpicture} } \]
\caption{Defining patterns for status-seeking and disappointment-free interview
orders}\label{f:strats}
\end{figure}

It is a surprising consequence of our analysis that although these patterns
have symmetric descriptions in terms of floor setting, they turn out to have
very different optimal strategies and asymptotic success probabilities!
Namely, the status-seeking interviews have simple and robust optimal strategies
that essentially allow the player to transition from rejection to hiring {\em
at any time}.  This results in a probability of success that is a ratio of
consecutive Catalan numbers which approaches $25\%$ as $N \rightarrow \infty$
(see Theorem~\ref{t:231_asy_main}).  Observe that this model allows a version
of the game in which no candidate prefers another interview position to their
own!  (By contrast, it is a recent criticism of the classical model that no
candidate would prefer to be among the first $1/e$ interview positions as they
will be rejected automatically.)

The disappointment-free interviews have a more subtle set of optimal strategies
involving thresholds based on {\em value-saturated} left-to-right maxima (see
Corollary~\ref{c:321_strat}) but the probability of success approaches a limit
that is more than $50\%$ (see Corollary~\ref{c:321_main}).  Note that the
optimal strategies for these models are not mutually exclusive so one may wager
{\em \`a la Pascal} and implement both at the same time; the optimal
disappointment-free strategy is also optimal for the status-seeking model.

In Robbins' problem (see \cite{bruss}) and other variants of the game where the
player seeks to maximize expected value, researchers develop ad hoc strategies
for analysis (because it is not clear what form an optimal strategy should
take).  As far as we know, thresholds based on saturated left-to-right maxima
have not appeared previously in the literature, but perhaps bear further
investigation in light of the fact that they yield the optimal strategy for our
disappointment-free interview orders.

We now mention some further ties to earlier work.  Martin Gardner's 1960
Scientific American column popularized what he called ``the game of googol,''
although the problem has roots which predate this.  His article has been
reprinted in \cite{gardner}.  One of the first papers to systematically study
the game of best choice in detail is \cite{gilbert--mosteller}.  Many other
variations and some history have been given in \cite{ferguson} and
\cite{freeman}.  Recently, researchers (e.g.  \cite{kleinberg08}) have begun
applying the best-choice framework to online auctions where the ``candidate
rankings'' are bids (that may arrive and expire at different times) and the
player must choose which bid to accept, ending the auction.  

Although there is an established ``Cayley'' or ``full-information'' version of
the game in which the player observes {\em values} from a given distribution,
it seems that only a few papers have considered alternative {\em rank} distributions
directly.  The paper \cite{reeves--flack} considers an explicit continuous
probability distribution that allows for dependencies between nearby arrival
ranks via a single parameter.  Inspired by approximation theory, the paper
\cite{kleinberg--etal} also studies some general properties of non-uniform rank
distributions for the secretary problem.  

Recent work of Buchbinder et al.  \cite{buchbinder--etal} uses linear
programming to find algorithms for solving best choice problems motivated by
applications to online auctions.  They give an ``incentive-compatible''
mechanism for which the probability of selecting a candidate is independent of their
position, a property that is closely related to the main finding for our
status-seeking game.  Under the rule that candidates may not be recalled once
rejected, their strategy is shown to succeed with probability $1/4$, exactly
the same as our asymptotic result for the status-seeking interviews!  At the
moment, this seems to be a curious coincidence.  

From the algebraic combinatorics perspective, Wilf has collected some results on
distributions of left-to-right maxima in \cite{wilf} and Prodinger
\cite{prodinger} has studied these under a geometric random model.  Although we
phrase our results in terms of the game of best choice, they may also be viewed
in some cases as an extension of the literature on distributions of
left-to-right maxima to subsets of pattern-avoiding permutations.  More
recently, several authors have investigated the distribution of various
permutation statistics for a random model in which a pattern-avoiding
permutation is chosen uniformly at random.  For example, \cite{miner-pak} finds
the positions of smallest and largest elements as well as the number of fixed
points in a random permutation avoiding a single pattern of size 3;
\cite{mp-312} finds the probability that one or two specified points occur in a
random permutation avoiding 312; and the work of several authors \cite{dhw,fmw}
determines the lengths of the longest monotone and alternating subsequences in a
random permutation avoiding a single pattern of size 3.  We also consider
uniformly random $321$-avoiding and $231$-avoiding permutations in our work, but
the statistics we are concerned with arise from the game of best choice.  In
some sense, our results refine the question of where a uniformly random
pattern-avoiding permutation achieves its maximum because in our problem we want
to transition so as to capture the maximum value.

We now outline the rest of this paper.  In Section~\ref{s:os}, we introduce the
notion of strike sets that express optimal strategies when we restrict our
interview orderings to some subset of permutations, and recall basic properties
of the Catalan and ballot numbers that serve as denominators for our
probabilities.  The rest of the paper characterizes the optimal strategies that
arise when we avoid a single pattern of size $3$.  In Sections~\ref{s:231} and
\ref{s:321} we analyze the status-seeking and disappointment-free models,
respectively.  We consider the remaining patterns of size $3$ in
Section~\ref{s:conclusions}.

\bigskip
\section{Strikes and Strategies}\label{s:os}

Fix a subset $\I_N$ of permutations of size $N$.  Such a subset defines a
restricted game of best choice as follows. 

\begin{definition}\label{d:pflat}
Given a sequence of $i$ distinct integers, we define its {\bf flattening} to be
the unique permutation of $\{1, 2, \ldots, i\}$ having the same relative order
as the elements of the sequence.  Given a permutation $\pi$, define the {\bf
$i$th prefix flattening}, denoted $\pf{i}$, to be the permutation obtained by
flattening the sequence $\pi_1, \pi_2, \ldots, \pi_i$.
\end{definition}

In the restricted game of best choice, some $\pi \in \I_N$ is chosen (uniformly
randomly, with probability $1/|\I_N|$) and each prefix flattening $\pf{1},
\pf{2}, \ldots$ is presented sequentially to the player.  If the player stops
at value $N$, they win; otherwise, they lose.  

To describe the form of an optimal strategy for such games, form the {\bf
prefix tree} consisting of all possible prefixes partially ordered by the
prefix flattenings they contain.  For example, the complete tree for $N = 4$ is
shown in Figure~\ref{f:unrest}.  The $N$th level always includes all of the
actual interview orders from $\I_N$ that may be encountered.

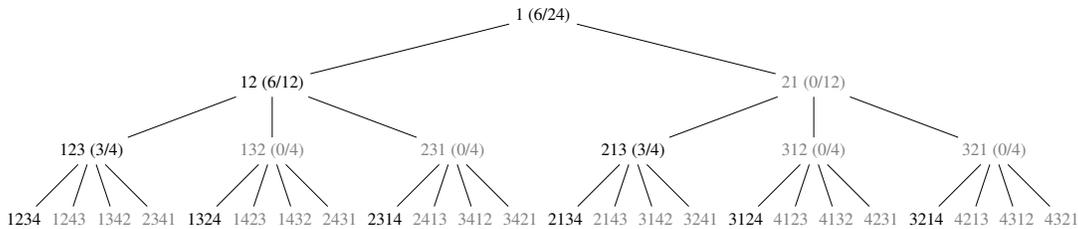
\begin{figure}[h]
\[ \scalebox{0.6}{ \begin{tikzpicture}[grow=down]
     \tikzstyle{level 1}=[sibling distance=120mm]
     \tikzstyle{level 2}=[sibling distance=40mm]
     \tikzstyle{level 3}=[sibling distance=10mm]
    \node {1 (6/24)}
     child{ node {12 (6/12)} 
       child{ node {123 (3/4)}
       child{ node{1234}}
       child{ node[gray] {1243}}
       child{ node[gray] {1342}}
       child{ node[gray] {2341}}
   }
   child{ node[gray]  {132 (0/4)}
       child{ node{1324}}
       child{ node[gray] {1423}}
       child{ node[gray] {1432}}
       child{ node[gray] {2431}}
   }
   child{ node[gray]  {231 (0/4)}
       child{ node{2314}}
       child{ node[gray] {2413}}
       child{ node[gray] {3412}}
       child{ node[gray] {3421}}
   }
 }
 child{ node[gray]  {21 (0/12)} 
       child{ node {213 (3/4)}
       child{ node{2134}}
       child{ node[gray] {2143}}
       child{ node[gray] {3142}}
       child{ node[gray] {3241}}
   }
   child{ node[gray]  {312 (0/4)}
       child{ node{3124}}
       child{ node[gray] {4123}}
       child{ node[gray] {4132}}
       child{ node[gray] {4231}}
   }
   child{ node[gray]  {321 (0/4)}
       child{ node{3214}}
       child{ node[gray] {4213}}
       child{ node[gray] {4312}}
       child{ node[gray] {4321}}
   }
 };
\end{tikzpicture}
}
\]
\caption{Unrestricted prefix tree for $N=4$ with strike probabilities}\label{f:unrest}
\end{figure}

A {\bf strike strategy} for a restricted game of best choice is defined by a
collection of prefixes we call the {\bf strike set}.  To play the strategy on a
particular interview ordering $\pi$, compare prefix flattenings to the strike
set at each step.  As soon as the $i$th prefix flattening occurs in the strike
set, accept the candidate at position $i$ and end the game.  Otherwise, the
strike strategy rejects the candidate at position $i$ to continue playing.

It follows directly from the definitions that any strategy (including the
optimal strategy) for a game of best choice can be represented as a strike
strategy because the player has only the relative ranking information captured
in the prefix flattenings to guide them as they play.  It suffices to restrict
our attention to strike sets that are antichains, meaning that no pair of
elements are related in the prefix tree.  If the descendants of a strike set
eventually include every permutation of size $N$, we say the strategy is {\bf
complete}.  Given a subset $S$ of prefixes, the {\bf completion} of $S$ is the
strike set consisting of the prefixes from $S$ together with all the
permutations from $\I_N$ that are not descendants of any prefix in $S$.

For each prefix $p$, let the {\bf strike probability} $\S_N(p)$ represent the
probability of winning the game if $p$ is included in the strike set for the
subset of interview orderings containing $p$ as a prefix.  Explicitly for $p =
p_1 p_2 \cdots p_k$, say that $\pi$ is {\bf $p$-winnable} if $\pi$ has $p$ as a
prefix flattening and value $N$ in the $k$th position.  Then $\S_N(p)$ is the
number of $p$-winnable permutations divided by the total number of permutations
from $\I_N$ having $p$ as a prefix flattening.  Note this implies that $\S_N(p)$
is $0$ unless $p$ ends in a left-to-right maximum.  For this reason, we refer to
a prefix as {\bf eligible} if it ends in a left-to-right maximum.  We have
indicated the $\S_N(p)$ values for $N = 4$ in Figure~\ref{f:unrest} with
ineligible prefixes shown in gray.

Given a complete strike set $A$, the probability of winning using the
corresponding strike strategy is then
\[ \mathcal{P}(A) = \bigoplus_{p \in A} \S_N(p) \]
where
\begin{enumerate}
    \item[(a)]  We view the probabilities $\S_N(p)$ as {\em pairs of integers} given by
        the numerator (number of $p$-winnable permutations) and denominator
        (total number of permutations having $p$ as a prefix flattening), 
        not as rational numbers.
    \item[(b)]  We sum $\frac{a}{b}$ and $\frac{c}{d}$ as the probability of the
        union of their underlying (independent) events, so $\frac{a}{b} \oplus \frac{c}{d} =
        \frac{a+c}{b+d}$.  
\end{enumerate} 

We say that a complete strike set $A$ is {\bf optimal} if $\mathcal{P}(A)$ is
maximal among all complete strike sets $A$.  Observe that the following
``backwards induction'' algorithm will always find an optimal strike set $A$.  

\begin{algorithm}\label{a:optimal}
Begin with $A$ equal to $\I_N$.  Choose an eligible prefix $p$ of maximal
size that has not yet been considered as part of this algorithm.  If $\S(p)$
is larger than the probability of winning on the interviews restricted to the
subforest strictly below the $p$ in the prefix tree, playing under the best
strategy that has been obtained so far, then replace the elements of $A$ having
$p$ as a prefix with $p$.  Otherwise, continue on to the next unconsidered
eligible prefix.  Eventually, we will consider the prefix $[1]$ at which point
$A$ will be a globally optimal strike set.
\end{algorithm}

\begin{example}
By inspection, the optimal strike set for the unrestricted set of interview
orders in $N=4$ is the completion of $\{ 12, 213, 3124, 3214 \}$, contributing
$11$ winners.  (This strategy coincides with the strategy that rejects
the first candidate and selects the next left-to-right maximum thereafter.)
\end{example}

In the unrestricted best choice game, the $\S_N(p)$ probability for an eligible
prefix $p$ is equal to the $\S_N(p')$ probability for any other eligible prefix
$p'$ of the same size.  This follows because we can perform an automorphism of
the symmetric group that rearranges all of the permutations with prefix
flattening $p$ to have prefix flattening $p'$.  Using arguments from
\cite{kadison}, the classical results can then be stated in terms of a {\bf
positional strategy} in which the player rejects the first $k$ candidates and
accepts the next left-to-right maximum thereafter.  

It is possible to generalize this to a {\bf trigger strategy} by defining a set
of prefixes to be {\bf triggers} in the sense that when a prefix flattening of
$\pi$ matches an element of the trigger set, the player rejects the current interview
candidate but accepts the next left-to-right maximum thereafter.  In the
classical case, optimal trigger sets consist of all prefixes having the critical
$1/e$ size.  

More generally still, we may consider some statistic from the set of prefixes to
nonnegative integers (say) together with a threshold function for that statistic
that can be used to define a {\bf threshold trigger} or {\bf threshold
strike} strategy.  That is, once the statistic is larger than the threshold, 
accept the current interview candidate (strike) or transition to accept the next
left-to-right maximum (trigger).  For example, the size statistic together
with the $1/e$ trigger threshold defines the optimal strategy in the classical
case, and we will see in Section~\ref{s:321} that the number of value-saturated
left-to-right maxima together with a quadratic strike threshold function defines
the optimal strategy in the disappointment-free (i.e. $321$-avoiding) case.  It
is an interesting problem to characterize or determine properties of $\I_N$ that
restrict the optimal strategy to one of these classes.

Given two restriction criteria, we say that the resulting prefix trees are {\bf
isomorphic} if there exists a bijection from the nodes of one tree to the other
that preserves the tree structure as well as the $\S_N(p)$ values.  In this
situation, we then obtain isomorphic strike sets to describe optimal strategies
for the two games and the probabilities of success under optimal play will be
equal.  In this work, our restriction criteria come from permutation pattern
avoidance and we call two patterns {\bf best-choice Wilf equivalent} if they
induce the same optimal probability of success in each restricted game of best
choice for all $N$.  Clearly, when two patterns induce isomorphic prefix trees
they are best-choice Wilf equivalent (and we know of no other examples).  In
the subsequent sections of this paper, we will find that there are four of
these generalized Wilf equivalence classes in $\mathfrak{S}_3$: \[ 231 \cong 132, 321
\cong 312, 123, 213.\]

Define the integer sequence of {\bf Catalan numbers} $\C_N$ by 
\[ \C_N = \sum\limits_{i+j=N-1} \C_i \C_j = \C_0 \C_{N-1} + \C_1 \C_{N-2} + \cdots + \C_{N-1} \C_0 \]
where $\C_0 = 1$ and $\C_1 = 1$.  The first few terms are $1, 1, 2, 5, 14, 42,
132, 429, 1430, \ldots$, and we have the explicit formula
\[ \C_N = \frac{1}{N+1} {2N \choose N}. \]
One of the earliest enumerative results in permutation pattern avoidance is that
the number of permutations avoiding any pattern of size $3$ is counted by the
Catalan sequence, so these will form denominators in our probability
calculations.  We refer the reader to the textbook \cite{bona} for details and
references.

The {\bf ballot numbers} (sequence A009766 in \cite{oeis})
\[ \C(N,k) = \frac{k+1}{N+1} { {2N-k} \choose {N} } \]
are a refinement of the Catalan numbers (where $\C(N,0) = \C(N,1) = \C_N$).
Some data is shown in Figure~\ref{f:ballot}.  They are defined by (either of)
the recurrences
\[ \C(N,k) = \sum_{i=0}^{k} \C(N-i,k+1-i) = \C(N-1,k-1) + \C(N,k+1) \]
with the initial conditions that $\C(N,N) = 1$ for all $N$.  

\begin{figure}[h]
    \scalebox{0.8}{
\begin{tabular}{llllllllllllll}
    $N$ & \color{gray} $k=0$ & $k=1$ & $k=2$ & $k=3$ & $k=4$ & $\cdots$ \\
    \hline \\
    $1$ & \color{gray} $1$ & $1$ \\
    $2$ & \color{gray} $2$ & ${ 2 }$ & $1$ \\
    $3$ & \color{gray} $5$ & ${ 5 }$ &  ${ 3 }$ & $1$ \\
    $4$ & \color{gray} $14$ & ${ 14 }$ &  ${ 9 }$ &  ${ 4 }$ &  $1$ \\
    $5$ & \color{gray} $42$ & ${ 42 }$ &  ${ 28 }$ &  ${ 14 }$ &  ${ 5 }$ &  $1$ \\
    $6$ & \color{gray} $132$ & ${ 132 }$ &  ${ 90 }$ &  ${ 48 }$ &  ${ 20 }$ &  ${ 6 }$ & $1$  \\
    $7$ & \color{gray} $429$ & ${ 429 }$ &  ${ 297 }$ &  ${ 165 }$ &  ${ 75 }$ &  ${ 27 }$ &  ${ 7 }$ & $1$ \\
    $8$ & \color{gray} $1430$ & ${ 1430 }$ &  ${ 1001 }$ &  ${ 572 }$ &  ${ 275 }$ &  ${ 110 }$ &  ${ 35 }$ & ${ 8 }$ & $1$ \\
    $9$ & \color{gray} $4862$ & ${ 4862 }$ &  ${ 3432 }$ &  ${ 2002 }$ &  ${ 1001 }$ &  ${ 429 }$ &  ${ 154 }$ & ${ 44 }$ &  ${ 9 }$ & $1$ \\
\end{tabular} }
\caption{Ballot numbers}\label{f:ballot}
\end{figure}

These ballot numbers arise as denominators of $\S_N(p)$ probabilities.

\bigskip
\section{231-avoiding (isomorphic with 132-avoiding)}\label{s:231}

In this section, we derive optimal strategies for the 231-avoiding best choice
game, and show that this game is isomorphic to the 132-avoiding best choice
game.

\begin{lemma}\label{l:231_structure}
    Every $231$-avoiding permutation $\pi$ decomposes 
as $[\pi_1 \pi_2 \cdots \pi_{i-1} N \pi_{i+1} \cdots \pi_N]$
where each entry of $[\pi_1 \cdots \pi_{i-1}]$ is smaller in value than each entry
of $[\pi_{i+1} \cdots \pi_N]$, and both of these are $231$-avoiding.
\end{lemma}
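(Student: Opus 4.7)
The plan is to locate the position $i$ of the maximum value $N$ in $\pi$ (which is uniquely determined since $\pi$ is a permutation), and then argue that the decomposition at this position automatically satisfies both claimed properties purely by the $231$-avoidance of $\pi$. So I would first write $\pi = [\pi_1 \cdots \pi_{i-1}\, N\, \pi_{i+1} \cdots \pi_N]$, and let $L = [\pi_1, \ldots, \pi_{i-1}]$ and $R = [\pi_{i+1}, \ldots, \pi_N]$.

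The main content is showing that every entry of $L$ is smaller in value than every entry of $R$. I would prove this by contradiction: suppose there exist indices $j < i < k$ with $\pi_j > \pi_k$. Since $\pi_j < N$, the triple $(\pi_j, \pi_i, \pi_k) = (\pi_j, N, \pi_k)$ satisfies $\pi_k < \pi_j < N$, which is a $231$-pattern in $\pi$, contradicting our hypothesis. Hence no such pair exists, and the values in $L$ are strictly dominated by those in $R$.

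The last claim, that $L$ and $R$ are each $231$-avoiding, is immediate: a $231$-pattern in either $L$ or $R$ would give a subsequence of $\pi$ in the appropriate relative order, hence a $231$-pattern in $\pi$ itself, which is ruled out by hypothesis. The flattenings of $L$ and $R$ are therefore $231$-avoiding permutations of $\{1,\ldots,i-1\}$ and (after relabeling) $\{1,\ldots,N-i\}$ respectively.

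There is no real obstacle here; the argument is purely a direct application of the definition of pattern containment. The only point that needs a brief justification is that the splitting position is canonically the position of $N$ rather than some arbitrary choice, which makes the decomposition unique and the recursive structure unambiguous for later use.
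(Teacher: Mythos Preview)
Your proof is correct and follows the same approach as the paper: locate $N$, and use the fact that any entry to the left of $N$ larger than an entry to the right would create a $231$-instance with $N$ playing the role of $3$. The paper gives only a one-line sketch (``$N$ must play the role of $3$ in any $231$ instance'') and a reference, so your version is simply a fuller write-up of the same argument.
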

\begin{proof}
    Graphically, we are claiming that the diagram of any $231$-avoiding permutation must have the form
\[ \scalebox{0.2}{ \begin{tikzpicture}
		\draw (0,0) -- (9,0) -- (9,9) -- (0,9) -- (0,0);
		\draw (4,8) node [circle, fill=black, inner sep=0pt, minimum size=12pt, draw] {} ;
		\draw (1,1) -- (4,1) -- (4,4) -- (1,4) -- (1,1);
		\draw (5,4) -- (8,4) -- (8,7) -- (5,7) -- (5,4);
\end{tikzpicture} }\]
where each block is itself $231$-avoiding.  Observe that this decomposition
also realizes the Catalan recurrence. This is straightforward (because $N$
must play the role of $3$ in any $231$ instance) and well-known; see
\cite{bona} for example.
\end{proof}

The complete $231$-avoiding prefix tree for $N = 4$ is shown in
Figure~\ref{f:231tree} with the strike probabilities $\S_N(p)$ given in
parentheses.  We mention that these prefix trees (up to some position/value
conventions) coincide with the generating trees used by West to give uniform
proofs of some early enumerative results on permutation patterns; see
\cite{west} for an introduction.

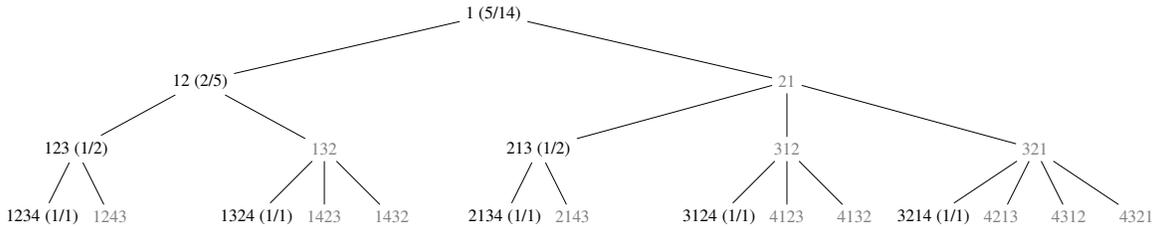
\begin{figure}[h]
\[ 
\scalebox{0.6}{ \begin{tikzpicture}[grow=down]
     \tikzstyle{level 1}=[sibling distance=130mm]
     \tikzstyle{level 2}=[sibling distance=55mm]
     \tikzstyle{level 3}=[sibling distance=15mm]
    \node {1 (5/14)}
     child{ node {12 (2/5)} 
       child{ node {123 (1/2)}
       child{ node[black] {1234 (1/1)}}
       child{ node[gray] {1243}}
   }
   child{ node[gray] {132}
       child{ node[black] {1324 (1/1)}}
       child{ node[gray] {1423}}
       child{ node[gray] {1432}}
   }
 }
 child{ node[gray] {21} 
       child{ node {213 (1/2)}
       child{ node[black] {2134 (1/1)}}
       child{ node[gray] {2143}}
   }
   child{ node[gray] {312}
       child{ node[black] {3124 (1/1)}}
       child{ node[gray] {4123}}
       child{ node[gray] {4132}}
   }
   child{ node[gray] {321}
       child{ node[black] {3214 (1/1)}}
       child{ node[gray] {4213}}
       child{ node[gray] {4312}}
       child{ node[gray] {4321}}
   }
 };
\end{tikzpicture}
}
\]
\caption{$231$-avoiding prefix tree for $N = 4$}\label{f:231tree}
\end{figure}

\begin{definition}
Given a $231$-avoiding permutation $\pi$ of size $N$, define $\varphi(\pi)$ as
follows.  Move the value $N$ (at least once) one position at a time to the
right, keeping all other values in the same relative order.  Stop as soon as
the permutation avoids $231$.  
\end{definition}

In practice, this means that $\varphi$ moves $N$ past any and all entries that
form an inversion with the entry immediately to the right of $N$.  Also note
that placing $N$ in the last position is always valid, so $\varphi$ is defined
for all $\pi$ not having $N$ in the last position.  

For our main result, we define the {\bf successors} of an eligible prefix $p$
to be the set of prefixes $q$ (eligible or not) whose longest proper eligible
prefix is $p$.

\begin{theorem}\label{t:231_main}
For each eligible prefix $p$ in the $231$-avoiding prefix tree, we have 
\[ \S_N(p) = \bigoplus_{\substack{\text{successors $q$ of $p$}}} \S_N(q). \]
\end{theorem}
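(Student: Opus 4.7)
My plan is to split the claimed identity into separate numerator and denominator statements and then reassemble them via the definition of $\oplus$.  Writing $W(r)$ for the number of $r$-winnable permutations in $\I_N$ and $D(r)$ for the total number of permutations in $\I_N$ having $r$ as a prefix flattening, what I need is
\[
W(p) = \sum_q W(q) \qquad \text{and} \qquad D(p) = \sum_q D(q),
\]
with sums over successors $q$ of $p$.  The denominator identity is almost formal: starting from any $\pi \in \I_N$ that extends $p$, walking down the prefix tree of $\pi$ and stopping at the first prefix that is either eligible or $\pi$ itself yields the unique successor of $p$ of which $\pi$ is a descendant, so the successors partition the extensions of $p$.

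For the numerator identity, note that $W(q) = 0$ whenever $q$ is ineligible, so only eligible successors contribute.  I would prove the identity bijectively using the map $\varphi$.  Given a $p$-winnable $\pi$, Lemma~\ref{l:231_structure} lets me decompose $\pi = u \cdot N \cdot v_1 v_2 \cdots v_{N-|p|}$ with $u$ using the values $\{1,\dots,|p|-1\}$ and $v = v_1 \cdots v_{N-|p|}$ a $231$-avoiding arrangement of $\{|p|,\dots,N-1\}$.  Unpacking the definition of $\varphi$ (moving $N$ past inversions with its immediate right neighbor, stopping at the first $231$-avoiding position) shows that $\varphi(\pi)$ places $N$ at position $|p| + j$, where $j$ is the smallest index for which $\max(v_1,\dots,v_j) \leq \min(v_{j+1},\dots,v_{N-|p|})$.

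The structural linchpin is the claim that $v_1 = \max(v_1,\dots,v_j)$ for this minimal $j$.  I would argue by contrapositive: if some $v_i$ with $2 \leq i \leq j$ satisfied $v_i > \max(v_1,\dots,v_{i-1})$, then picking any $v_{i'}$ realizing that maximum, the $231$-avoidance of $v$ forces every later $v_{i''}$ to satisfy $v_{i''} \geq v_{i'}$ (otherwise $v_{i'} v_i v_{i''}$ is a $231$-pattern), so the stopping condition of $\varphi$ would already hold at $i - 1$, contradicting minimality of $j$.  With $v_1$ being the maximum, the entries $v_2, \dots, v_j$ lying between the length-$|p|$ prefix and $N$ in $\varphi(\pi)$ are each strictly less than $v_1$ and hence not left-to-right maxima; the length-$|p|$ prefix $u \cdot v_1$ flattens to $p$ (since $v_1 > u$), so the length-$(|p|+j)$ prefix flattening $q$ of $\varphi(\pi)$ is an eligible successor of $p$ and $\varphi(\pi)$ is $q$-winnable.

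For the inverse, given a $q$-winnable $\sigma$ with $q$ an eligible successor of length $|p| + j$, I would slide $N$ in $\sigma$ from position $|p|+j$ back to position $|p|$ to obtain $\pi$.  Applying Lemma~\ref{l:231_structure} to $\sigma$ and combining it with the no-intermediate-eligible-prefix condition that distinguishes successors forces $\{\sigma_1,\dots,\sigma_{|p|-1}\} = \{1,\dots,|p|-1\}$ and $\sigma_{|p|} = |p|+j-1$, which are exactly the value conditions ensuring that $\pi$ is $231$-avoiding, $p$-winnable, and that $\varphi(\pi) = \sigma$.  The main obstacle is the structural claim on $v_1$: it is the sole place where $231$-avoidance of $v$ is used essentially, and once it is in hand, everything else is careful bookkeeping with the position of $N$ and the decomposition of Lemma~\ref{l:231_structure}.
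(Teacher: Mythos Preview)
Your proposal is correct and follows essentially the same approach as the paper: both arguments use the map $\varphi$ (slide $N$ rightward until $231$-avoiding) as a bijection from $p$-winnable permutations to the union of $q$-winnable permutations over eligible successors $q$, with the inverse given by sliding $N$ back. Your explicit structural claim that $v_1 = \max(v_1,\dots,v_j)$ is exactly what the paper encodes in its remark that ``$\varphi$ moves $N$ past any and all entries that form an inversion with the entry immediately to the right of $N$,'' and your more detailed verification of the inverse direction simply unpacks the paper's one-line appeal to Lemma~\ref{l:231_structure}.
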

\begin{proof}
From the definitions, it is clear that the denominator on the left side is equal
to the sum of the denominators on the right side.  Suppose $\pi$ is a
$231$-avoiding permutation of $N$ and suppose that $p$ is the prefix flattening
$\pf{i}$, which ends in a left-to-right maximum.  If including $p$ in the strike
set wins $\pi$ then $\pi_i = N$.  In this case, $\varphi(\pi)$ will be winnable
with $q$ in the strike set for precisely one successor $q$, namely $q =
\varphi(\pi)|_{[j]}$ where $j$ is the position of $N$ in $\varphi(\pi)$.
Conversely, if $q$ is an eligible successor of $p$ and $\pi$ is winnable with
$q$ in the strike set, then $\pi_j = N$ where $j$ is the size of $q$.  Sliding
$N$ back to position $i$, where $i$ is the size of $p$, and keeping the other
values of $\pi$ in the same relative order yields a permutation that is winnable
with $p$ in the strike set.  Since $\pi_i$ will be the left-to-right maximum
immediately prior to $\pi_j = N$ in $\pi$, the resulting permutation will remain
$231$-avoiding.  Thus, for a fixed eligible prefix $p$, we have that $\varphi$
is a bijection from the set of $p$-winnable permutations to the union of the set
of $q$-winnable permutations where $q$ is any eligible successor of $p$.  This
proves the numerator on the left side is equal to the sum of the numerators on
the right side.
\end{proof}

\begin{corollary}\label{c:231cc}
For the $231$-avoiding interviews, the completion of any antichain of eligible
prefixes forms an optimal strike set.
\end{corollary}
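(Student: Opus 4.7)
The plan is to show, using Theorem \ref{t:231_main} iteratively, that $\mathcal{P}(A^{\mathrm{comp}})$ takes the same value for every antichain $A$ of eligible prefixes, and then appeal to Algorithm \ref{a:optimal} to conclude that this common value is the optimum. The key step is a \emph{refinement operation} on complete strike sets: given a complete strike set $S$ containing an eligible prefix $p$ with $|p|<N$, I would replace $p$ by the set of its successors (as used in Theorem \ref{t:231_main}) to form a new set $S'$. Because the successors of $p$ are pairwise incomparable in the prefix tree and their descendants together partition the size-$N$ permutations of $\I_N$ descending from $p$, the set $S'$ remains a complete strike set; and the identity $\S_N(p)=\bigoplus_q \S_N(q)$ of Theorem \ref{t:231_main} transfers to the global $\oplus$-sum because $\oplus$ adds numerators and denominators separately. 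Therefore $\mathcal{P}(S')=\mathcal{P}(S)$.

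Starting from any $A^{\mathrm{comp}}$, I would iterate the refinement, at each step picking an eligible element of the current strike set whose size is less than $N$. The sum of sizes of elements in the strike set strictly increases at every refinement (each successor of $p$ is strictly longer than $p$), so the process terminates. The terminal state contains no eligible prefix of size less than $N$, hence consists only of size-$N$ permutations, and by completeness coincides with $\I_N$ itself. Applying Lemma \ref{l:231_structure}, the winning probability of this terminal strike set equals the fraction of $231$-avoiders of size $N$ ending in the value $N$, namely $\C_{N-1}/\C_N$. Hence $\mathcal{P}(A^{\mathrm{comp}})=\C_{N-1}/\C_N$ for every antichain $A$ of eligible prefixes.

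Since Algorithm \ref{a:optimal} returns an optimal strike set of the form $A^{\mathrm{comp}}$, the common value $\C_{N-1}/\C_N$ is the optimum, and every $A^{\mathrm{comp}}$ is optimal. The main piece of bookkeeping is to verify the partition property used in the refinement step: that the successors of an eligible prefix $p$, defined by the longest-proper-eligible-prefix condition, collectively account for each size-$N$ descendant of $p$ exactly once. This is the hardest obstacle, but it is immediate once one observes that every such descendant has a unique \emph{first eligible descendant of $p$} on its way down the prefix tree, or else remains non-eligible until size $N$; these two cases are exactly the successors used implicitly in the bijective proof of Theorem \ref{t:231_main}.
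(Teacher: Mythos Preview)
Your proof is correct and follows essentially the same approach as the paper: both use Theorem~\ref{t:231_main} to show that replacing an eligible prefix by its successors (or conversely) leaves $\mathcal{P}$ unchanged, so every completion of an eligible antichain has the same success probability and is therefore optimal.  The only cosmetic differences are that you refine downward from $A^{\mathrm{comp}}$ to $\I_N$ while the paper coarsens upward from $\I_N$, and you compute the common value $\C_{N-1}/\C_N$ explicitly here rather than deferring it to Theorem~\ref{t:231_asy_main}.
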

\begin{proof}
Consider the set of all $231$-avoiding prefixes of size $N$.  Any other
antichain in the prefix tree consisting of some eligible prefixes together with
some prefixes of size $N$ can be obtained from this by a sequence of moves in
which we replace a collection of successors with their predecessor, as in
Algorithm~\ref{a:optimal}.  By Theorem~\ref{t:231_main}, we do not change the
probability of success.  
\end{proof}

\begin{theorem}
Any complete trigger set is optimal.  In particular, any positional strategy is
optimal.
\end{theorem}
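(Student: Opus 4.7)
The plan is to reduce this to Corollary~\ref{c:231cc} by showing that every complete trigger set induces a strike strategy which is exactly the completion of an antichain of eligible prefixes. Let $T$ be a complete trigger set. Without loss of generality I may assume $T$ is an antichain in the prefix tree: if $t_1 \subsetneq t_2$ were both in $T$, then $t_2$ is never consulted because the trigger has already fired upon seeing $t_1$, so we may discard $t_2$ without changing the strategy. Completeness together with antichain now implies that every $\pi \in \I_N$ has exactly one prefix flattening $t \in T$.

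I would then define $A_T$ to consist of all eligible prefixes $q$ such that $q$ properly extends some $t \in T$ and no other eligible prefix sits strictly between $t$ and $q$ in the prefix tree. By construction $A_T$ is an antichain of eligible prefixes: if $q_1 \subsetneq q_2$ were both in $A_T$, then $q_1$ would itself be an eligible prefix strictly between $q_2$ and the trigger ancestor of $q_2$, contradicting the definition.

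The next step is to verify that the strike strategy induced by the trigger set $T$ agrees with the completion of $A_T$ on every $\pi \in \I_N$. For such a $\pi$, the unique trigger $t \in T$ fires at position $|t|$; afterwards the player accepts the first left-to-right maximum in $\pi$ occurring after position $|t|$, if any exists. If such a maximum appears at position $j$, then $\pf{j}$ is the unique eligible prefix of $\pi$ extending $t$ without an intermediate eligible prefix, hence lies in $A_T$, and the two strategies strike in the same place. If no such maximum exists, then both strategies effectively reject every candidate: in the trigger strategy explicitly, and in the strike strategy because $\pi$ is then not a descendant of $A_T$ and gets added to the strike set by the completion as a last-candidate strike, which loses (since $N$ must occur at or before position $|t|$ in this case, so $\pi_N \neq N$). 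Thus the outcomes coincide for every $\pi$, and Corollary~\ref{c:231cc} immediately gives optimality of the completion of $A_T$.

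Finally, the ``in particular'' claim follows by observing that the positional strategy which rejects the first $k$ candidates corresponds to the trigger set $T_k$ consisting of all prefixes of size $k$; this set is obviously an antichain whose descendants include every element of $\I_N$, so it is a complete trigger set. The main subtlety is the bookkeeping in the ``no LTR max after trigger'' case, where I need to match a losing outcome of the trigger strategy with a losing permutation added by the completion; otherwise the argument is essentially a translation between the two languages made possible by the flexibility Corollary~\ref{c:231cc} grants us.
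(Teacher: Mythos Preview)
Your approach is correct and takes a genuinely different route from the paper's.  The paper proves the theorem by establishing a separate recurrence for trigger probabilities, $\T_N(p) = \bigoplus_{q \text{ child of } p} \T_N(q)$, via a case analysis that reuses the sliding bijection $\varphi$ (parallel to, but independent of, the argument for Theorem~\ref{t:231_main}); this shows every complete trigger set has the same total probability as the size-$(N-1)$ trigger set, which manifestly agrees with the optimal strike strategy.  Your reduction is more economical: you observe that \emph{playing} a trigger strategy on each $\pi$ is literally playing the strike strategy whose strikes are the first eligible extensions of the triggers, and then you invoke Corollary~\ref{c:231cc} directly, without repeating any bijective work or case analysis.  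The paper's route has the side benefit of exhibiting the trigger-probability identity itself (cf.\ Figure~\ref{f:T231tree}), which carries independent structural information; your route shows more transparently \emph{why} trigger strategies cannot beat strike strategies in this model.

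One small remark: your claim that $\pi_N \neq N$ in the ``no left-to-right maximum after the trigger'' case tacitly uses $|t| < N$.  That is the intended convention for triggers (a size-$N$ trigger would reject the last candidate with nothing left to accept, and indeed the paper's own induction starts from the size-$(N-1)$ trigger set), but it is worth stating explicitly, since a size-$N$ trigger $t$ with $t_N = N$ would otherwise break the outcome-matching in that case.
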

\begin{proof}
We claim that
\[ \T_N(p) = \bigoplus_{\substack{\text{children $q$ of $p$}}} \T_N(q). \]
where $\T_N(p)$ are the {\bf trigger probabilities} defined as the number of
$\pi$ that are won if $p$ is included as a trigger (explicitly, the number of
$\pi$ where the last entry of $p$ lies between the last two left-to-right
maxima in $\pi$) divided by the total number of $\pi$ having $p$ as a prefix
flattening.  We also allow a null prefix $p = \emptyset$; as a trigger, this
simply selects the first interview candidate.  The trigger probabilities are
illustrated for $N = 4$ in Figure~\ref{f:T231tree}.

To verify the equation, let $\pi$ be a $231$-avoiding permutation of $N$ and
suppose that $p$ is the prefix flattening $\pf{i-1}$ and $q$ is the prefix
flattening $\pf{i}$.  Then, there are several cases.
\begin{enumerate}
    \item[(1)]  Suppose $\pf{i}$ does not end in a left-to-right maximum.  Then
        $\pi$ is $p$-winnable if and only if it is $q$-winnable.  
    \item[(2)]  Suppose $\pf{i}$ does end in a left-to-right maximum.  Observe
        that $\varphi(\pi)|_{[i]}$ also ends in a left-to-right maximum since all
        entries to the right of $N$ are larger than entries to the left of $N$
        by Lemma~\ref{l:231_structure}.  
        
        Also note that in $\pi$, value $N$ must lie in some position $j \geq i$ (for
        otherwise we would be in Case (1) above because $N$ is always the last
        left-to-right maximum).

        Now consider the following subcases: 
    \begin{enumerate} 
        \item [(a)]  $\pi$ is $q$-winnable.  Then $j > i$.

        \item [(b)]  $\pi$ is not $q$-winnable but $\varphi(\pi)$ is $q$-winnable.
            Then we must have $j = i$.

        \item [(c)]  Neither $\pi$ nor $\varphi(\pi)$ are $q$-winnable.  Then
            neither element can be $p$-winnable because any $p$-winnable
            element in Case (2) must have $N$ in position $i$ and then
            it is straightforward to see that $\varphi(\pi)$ would be $q$-winnable.
    \end{enumerate}
    Observe that applying $\varphi$ is a bijection from the elements in subcase
    (b) to the elements in subcase (a).  Moreover, the elements in subcase (b)
    are $p$-winnable but not $q$-winnable, while elements in subcase (a) are
    $q$-winnable but not $p$-winnable.  The elements in subcase (c) are neither
    $p$-winnable nor $q$-winnable.
\end{enumerate}

Thus, the probabilities are preserved as claimed in each case.  Since any
trigger set can be obtained, starting from all of the size $N-1$ prefixes as our
initial trigger set (which agrees with the corresponding initial strike
strategy) by a sequence of moves in which we replace a collection of children
with their parent, we find that any complete trigger strategy (and hence any
positional strategy) is optimal.
\end{proof}

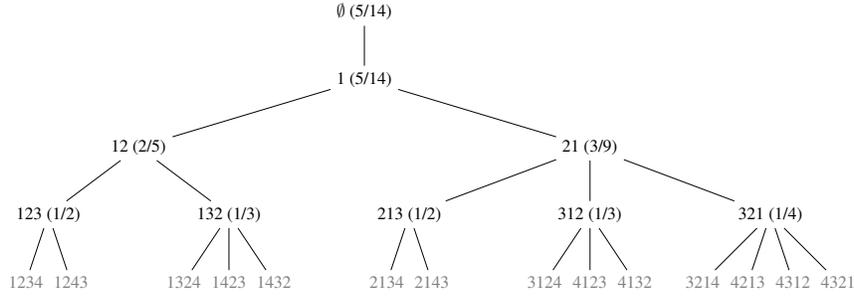
\begin{figure}[t]
\[ \scalebox{0.6}{ \begin{tikzpicture}[grow=down]
     \tikzstyle{level 1}=[sibling distance=100mm]
     \tikzstyle{level 2}=[sibling distance=100mm]
     \tikzstyle{level 3}=[sibling distance=40mm]
     \tikzstyle{level 4}=[sibling distance=10mm]
    \node {$\emptyset$ (5/14)}
    child{ node {1 (5/14)}
     child{ node {12 (2/5)} 
       child{ node {123 (1/2)}
       child{ node[gray] {1234}}
       child{ node[gray] {1243}}
   }
       child{ node {132 (1/3)}
       child{ node[gray] {1324}}
       child{ node[gray] {1423}}
       child{ node[gray] {1432}}
   }
 }
     child{ node {21 (3/9)} 
       child{ node {213 (1/2)}
       child{ node[gray] {2134}}
       child{ node[gray] {2143}}
   }
       child{ node {312 (1/3)}
       child{ node[gray] {3124}}
       child{ node[gray] {4123}}
       child{ node[gray] {4132}}
   }
       child{ node {321 (1/4)}
       child{ node[gray] {3214}}
       child{ node[gray] {4213}}
       child{ node[gray] {4312}}
       child{ node[gray] {4321}}
   }
 } };
\end{tikzpicture} } \]
\caption{Trigger probabilities for $N = 4$}\label{f:T231tree}
\end{figure}

\begin{theorem}\label{t:231_asy_main}
For the $231$-avoiding interviews of size $N$, the optimal probability of success is the
ratio of Catalan numbers $\C_{N-1}/\C_N$, which approaches $1/4$ as $N \rightarrow
\infty$.
\end{theorem}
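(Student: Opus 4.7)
The plan is to combine the optimality result just established with an explicit probability computation for the simplest possible optimal strategy. Since the preceding theorem shows that every complete trigger set is optimal, I would choose the trigger set consisting of the single empty prefix $\emptyset$. This strategy instructs the player to accept the very first left-to-right maximum, which is necessarily $\pi_1$ itself. Consequently the optimal probability of success equals the proportion of $231$-avoiding permutations of size $N$ in which the value $N$ occupies position $1$.

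To count these, I would apply Lemma~\ref{l:231_structure}: when $N$ sits in position $1$, the ``left block'' is empty and the ``right block'' is an arbitrary $231$-avoiding permutation of $\{1,2,\ldots,N-1\}$. Hence the number of favorable permutations is exactly $\C_{N-1}$, while the total number of $231$-avoiding permutations of size $N$ is $\C_N$, giving optimal success probability $\C_{N-1}/\C_N$.

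For the asymptotic claim, I would use the closed form $\C_N = \frac{1}{N+1}\binom{2N}{N}$ and simplify
\[ \frac{\C_{N-1}}{\C_N} \;=\; \frac{N+1}{N}\cdot\frac{\binom{2N-2}{N-1}}{\binom{2N}{N}} \;=\; \frac{N+1}{N}\cdot\frac{N^2}{2N(2N-1)} \;=\; \frac{N+1}{2(2N-1)}, \]
which tends to $1/4$ as $N\to\infty$. Neither ingredient poses a real obstacle; the key observation is that the preceding trigger-optimality theorem lets us evaluate the success probability on the trivial strategy ``accept $\pi_1$,'' reducing the proof to a one-line Catalan count and a routine ratio computation.
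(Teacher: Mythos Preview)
Your proof is correct and follows the same overall plan as the paper: pick a particularly simple optimal strategy, count the winnable permutations directly, and simplify the Catalan ratio. The only difference is which simple strategy is used. The paper invokes Corollary~\ref{c:231cc} and takes the strike set of all size-$N$ prefixes (i.e.\ ``accept the last candidate''), which wins precisely when $N$ occupies position $N$; you invoke the trigger theorem and take the empty trigger (i.e.\ ``accept the first candidate''), which wins precisely when $N$ occupies position $1$. By the structural decomposition of Lemma~\ref{l:231_structure}, both events are counted by $\C_{N-1}$, so the two routes are symmetric variants of the same argument. Your explicit simplification $\C_{N-1}/\C_N = (N+1)/(2(2N-1))$ is a nice addition that the paper leaves implicit.
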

\begin{proof}
Consider the strike set consisting of the $\C_N$ $231$-avoiding prefixes of
size $N$.  This is optimal by Corollary~\ref{c:231cc}.  Exactly $\C_{N-1}$ of
them are winnable because $N$ must lie in the last position.  Hence, the
numerator in the success probability for this strike set is $\C_{N-1}$.  Therefore,
the probability of success is $\C_{N-1} / \C_N$, and the asymptotics for this
ratio of Catalan numbers are straightforward from the explicit formula given in
Section~\ref{s:os}.
\end{proof}

Finally, we show that the $231$-avoiding prefix tree and $\S_N(p)$ probabilities
are isomorphic to those for the $132$-avoiding interview orders.

\begin{definition}
Let $\pi = [\pi_L \ \ N\ \  \pi_R]$ be a $231$-avoiding permutation with the
decomposition from Lemma~\ref{l:231_structure}, so the block $\pi_L$ has values
$\{1, 2, \ldots, k\}$ and $\pi_R$ has values $\{k+1, \ldots, N-1\}$.

Then we define $\Upsilon$ recursively where $\Upsilon$ is the identity on
permutations of size $2$ or less and in general,
\[ \Upsilon(\pi) = [ \Upsilon(\pi_L)_\uparrow \ \ N\ \ \Upsilon(\pi_R)_\downarrow ] \]
where the $\uparrow$ and $\downarrow$ operators reverse the blocks of values;
i.e. $\Upsilon(\pi_L)_\uparrow$ is a block with values $\{N-k, \dots, N-1\}$
and relative order given by $\Upsilon(\pi_L)$, and $\Upsilon(\pi_R)_\downarrow$
is a block with values $\{1, 2, \ldots, N-k-1\}$ and relative order given by
$\Upsilon(\pi_R)$.
\end{definition}

It is straightforward to check that $\Upsilon$ is a bijection from the set of
$231$-avoiding permutations of size $N$ to the set of $132$-avoiding
permutations of size $N$.

\begin{theorem}
The prefix tree for $231$-avoiding permutations is isomorphic to the prefix
tree for $132$-avoiding permutations.
\end{theorem}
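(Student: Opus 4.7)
The plan is to exhibit the map $\Upsilon$ itself as the required node bijection on the prefix tree. Since every prefix flattening of a $231$-avoiding permutation is again $231$-avoiding (pattern avoidance passes to flattenings of subsequences), $\Upsilon$ is defined at every level of the $231$-avoiding prefix tree; by the bijectivity of $\Upsilon$ from $231$-avoiders to $132$-avoiders of each size $k \leq N$, it provides a levelwise node bijection onto the $132$-avoiding prefix tree.

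The technical heart of the argument is a commutation lemma:
\[ \Upsilon(\sigma|_{[k]}) = \Upsilon(\sigma)|_{[k]} \]
for every $231$-avoiding $\sigma$ of size $m$ and every $k \leq m$. I would prove this by strong induction on $m$ (with $|\sigma| \leq 2$ as the trivial base case). Writing $\sigma = [\sigma_L\ m\ \sigma_R]$ as in Lemma~\ref{l:231_structure} with $j := |\sigma_L|$, I would split into three cases. (i) If $k \leq j$, then $\sigma|_{[k]} = \sigma_L|_{[k]}$ and the first $k$ entries of $\Upsilon(\sigma)$ lie inside $\Upsilon(\sigma_L)_\uparrow$ in their original relative order, so the claim reduces (after noting the $\uparrow$ operator is invisible under flattening) to the inductive hypothesis applied to $\sigma_L$. (ii) If $k = j+1$, unwinding definitions gives $[\Upsilon(\sigma_L)\ k]$ on both sides. (iii) If $k > j+1$, then $\sigma|_{[k]}$ itself decomposes under Lemma~\ref{l:231_structure} with maximum $k$ in position $j+1$, left block $\sigma_L$, and right block the shift into values $\{j+1,\ldots,k-1\}$ of the flattening $\sigma_R|_{[k-j-1]}$; applying the recursive definition of $\Upsilon$ to this decomposition and appealing to the inductive hypothesis on $\sigma_R$ matches the flattening of the first $k$ entries of $\Upsilon(\sigma) = [\Upsilon(\sigma_L)_\uparrow\ m\ \Upsilon(\sigma_R)_\downarrow]$.

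With the commutation lemma in hand, tree structure is preserved: if $q|_{[|p|]} = p$, then $\Upsilon(q)|_{[|p|]} = \Upsilon(q|_{[|p|]}) = \Upsilon(p)$, so parents map to parents. For the strike values, the bijection $\Upsilon$ on size-$N$ permutations restricts, via the commutation lemma, to a bijection between the set of $231$-avoiders with prefix flattening $p$ and the set of $132$-avoiders with prefix flattening $\Upsilon(p)$, matching the denominators of $\S_N$. Within these sets, $p$-winnability is the additional constraint $\pi_{|p|} = N$, and $\Upsilon$ fixes the position of $N$ (namely $|\sigma_L|+1$) directly from its recursive formula, so this extra constraint is preserved as well, matching the numerators.

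The main obstacle will be case (iii) of the commutation lemma. There the prefix straddles the position of $m$ in $\sigma$, and one must simultaneously unpack the Lemma~\ref{l:231_structure} decomposition of $\sigma|_{[k]}$, identify the first $k$ entries of $\Upsilon(\sigma)$ from its recursive formula, and apply the inductive hypothesis to $\sigma_R|_{[k-j-1]}$. The bookkeeping needed to align the $\uparrow$ and $\downarrow$ value-block shifts under the final flattening is the delicate step, but once it is carried out the equality drops out of the recursion.
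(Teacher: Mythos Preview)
Your proposal is correct and follows essentially the same approach as the paper: both use $\Upsilon$ as the node bijection, verify that it preserves the prefix relation, and observe that $\Upsilon$ fixes the position of $N$ to conclude that strike probabilities are preserved. Your commutation lemma $\Upsilon(\sigma|_{[k]}) = \Upsilon(\sigma)|_{[k]}$ is exactly the paper's ``prefix property'' stated as an equality, and your strong induction on $|\sigma|$ with the three-case split on the location of $k$ relative to the maximum is a somewhat more explicit organization of the same inductive argument the paper carries out.
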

\begin{proof}
Apply the bijection $\Upsilon$ to each $231$-avoiding prefix $p$.  We first
claim that the tree structure is preserved.  Namely, if $p$ is a prefix of $q$
in the $231$-avoiding tree then $\Upsilon(p)$ is a prefix of $\Upsilon(q)$ in
the $132$-avoiding tree.  We refer to this as the prefix property.  Note that
it suffices to check this when the size of $p$ is one less than the size of
$q$ by transitivity.  

Our strategy is to apply induction on the number of recursive steps used in the
application of $\Upsilon$.  So suppose that $p$ and $q$ are prefixes with $q =
[q_1 \cdots q_{i-1} q_i]$ where $q|_{[i-1]}$ and $p$ have the same relative
order.  If we apply $\Upsilon$ for a single recursive step, we will split $p$
and $q$ at their maximum elements, say $p_{MAX}$ and $q_{MAX}$.  If these
elements are equal, then the left factors of $p$ and $q$ have the same relative
order, while the right factors differ by the extra element $q_i$ at the end of
$q$.  Since applying $\Upsilon$ shifts the values as a block, we see that all
of the elements in the right factors of $q$ (not including $q_i$) and $p$ will
continue to have the same relative order.  So the prefix property holds by
induction in this case.  Otherwise, we must have $q_{MAX} = q_i$.  Then this
step has an empty right block in the decomposition of $q$.  So, in the
application of $\Upsilon$ at the {\em next} recursive step for $q$ we find that
$q_{MAX}$ will agree with $p_{MAX}$ in the application at {\em this} step for
$p$, so the prefix property continues to hold by induction in this case also.

Since applying $\Upsilon$ does not change the position of $N$, we have that
$\pi$ is $\pf{j}$-winnable if and only if $\Upsilon(\pi)$ is
$\Upsilon(\pi)|_{[j]}$-winnable.  So the $\S_N(p)$ probabilities are preserved
as well.
\end{proof}

\bigskip
\section{321 avoiding (isomorphic with 312-avoiding)}\label{s:321}

\subsection{Introduction}
The best choice game restricted to the $321$-avoiding interview orders is a bit
more complicated.  The prefix trees for $N = 4$ and $N = 5$ (the latter omits
the last level) with the $\S_N(p)$ probabilities are shown in
Figure~\ref{f:321trees}.

\begin{figure}[h]
\[ 
\scalebox{0.6}{ \begin{tikzpicture}[grow=right]
     \tikzstyle{level 1}=[level distance=25mm, sibling distance=50mm]
     \tikzstyle{level 2}=[level distance=30mm, sibling distance=20mm]
     \tikzstyle{level 3}=[level distance=35mm, sibling distance=5mm]
    \node {1 (1/14)}
     child{ node {12 (3/9)} 
       child{ node {123 (3/4)}
       child{ node[black] {1234 (1/1)}}
       child{ node[gray] {1243 (0/1)}}
       child{ node[gray] {1342 (0/1)}}
       child{ node[gray] {2341 (0/1)}}
   }
   child{ node[gray] {231 (0/3)}
       child{ node[black] {2314 (1/1)}}
       child{ node[gray] {2413 (0/1)}}
       child{ node[gray] {3412 (0/1)}}
   }
   child{ node[gray] {132 (0/2)}
       child{ node[black] {1324 (1/1)}}
       child{ node[gray] {1423 (0/1)}}
   }
 }
 child{ node[gray] {21 (0/5)} 
       child{ node {213 (2/3)}
       child{ node[black] {2134 (1/1)}}
       child{ node[gray] {2143 (0/1)}}
       child{ node[gray] {3142 (0/1)}}
   }
   child{ node[gray] {312 (0/2)}
       child{ node[black] {3124 (1/1)}}
       child{ node[gray] {4123 (0/1)}}
   }
 };
\end{tikzpicture}
}
\hspace{0.2in}
\scalebox{0.6}{ \begin{tikzpicture}[grow=right]
     \tikzstyle{level 1}=[level distance=25mm, sibling distance=51mm]
     \tikzstyle{level 2}=[level distance=30mm, sibling distance=21mm]
     \tikzstyle{level 3}=[level distance=35mm, sibling distance=6mm]
    \node {1 (1/42)}
     child{ node {12 (4/28)} 
       child{ node {123 (6/14)}
       child{ node {1234 (4/5)}}
       child{ node[gray] {1243 (0/2)}}
       child{ node[gray] {1342 (0/3)}}
       child{ node[gray] {2341 (0/4)}}
   }
   child{ node[gray] {231 (0/9)}
       child{ node {2314 (3/4)}}
       child{ node[gray] {2413 (0/2)}}
       child{ node[gray] {3412 (0/3)}}
   }
   child{ node[gray] {132 (0/5)}
       child{ node {1324 (2/3)}}
       child{ node[gray] {1423 (0/2)}}
   }
 }
 child{ node[gray] {21 (0/14)} 
       child{ node {213 (3/9)}
       child{ node {2134 (3/4)}}
       child{ node[gray] {2143 (0/2)}}
       child{ node[gray] {3142 (0/3)}}
   }
   child{ node[gray] {312 (0/5)}
       child{ node {3124 (2/3)}}
       child{ node[gray] {4123 (0/2)}}
   }
 } ;
\end{tikzpicture}
}
\]
\caption{$321$-avoiding prefix trees for $N = 4$ and $N = 5$}\label{f:321trees}
\end{figure}
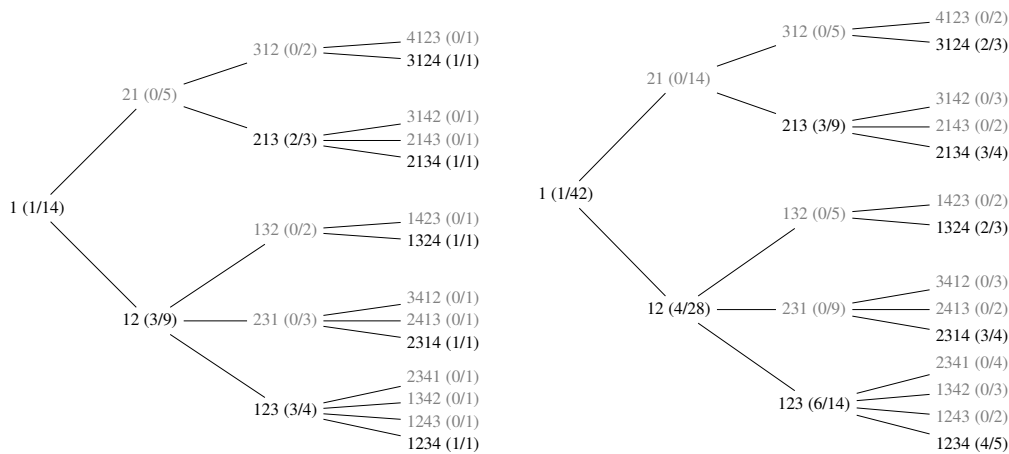

The following result describes the $\S_N(p)$ probabilities for arbitrary $N$.

\begin{theorem}\label{t:321_tree}
For the $321$-avoiding interview orders, the strike probabilities are
\[ \S_N(p) = \begin{cases}
        \S_{N-1}(\check{p}) & \mbox{ if $p$ is eligible and contains at least one inversion } \\
        \ & \ \\
        \frac{ { {N-1} \choose {k-1} } } { \frac{k+1}{N+1}  { {2N-k} \choose {N} } } & \mbox{ if $p = [12 \cdots k]$ } \\
    \end{cases} \]
where $\check{p}$ is the result of removing the value $1$ from $p$ (and flattening).
\end{theorem}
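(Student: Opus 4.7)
I would handle the two cases separately. For the identity prefix $p = [12\cdots k]$, a direct enumeration suffices. The denominator of $\S_N(p)$ is the number of $321$-avoiding permutations of size $N$ whose first $k$ entries are increasing (equivalently, whose first $k$ positions are all left-to-right maxima). I would identify this with the ballot number $\C(N, k)$, a classical result provable either via the standard bijection between $321$-avoiding permutations and Dyck paths (initial left-to-right maxima correspond to initial up-steps) or by checking the ballot recurrence from Section~\ref{s:os}. For the numerator, once $\pi_k = N$ is fixed the suffix $\pi_{k+1} \cdots \pi_N$ must be increasing (a descent there, together with $\pi_k = N$, would form a $321$ pattern), while the prefix is any increasing choice of $k-1$ values from $\{1, \ldots, N-1\}$, giving $\binom{N-1}{k-1}$ options.

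For the inversion case, let $m$ denote the position of $1$ in $p$ and consider the map $\Phi$ that deletes the global value $1$ from $\pi$ and flattens. The plan is to show that $\Phi$ is a bijection from $321$-avoiding $\pi$ of size $N$ with $\pf{k} = p$ onto $321$-avoiding $\sigma$ of size $N-1$ with $\sigma|_{[k-1]} = \check{p}$, and that $\Phi$ transports the winnability condition $\pi_k = N$ to $\sigma_{k-1} = N-1$. The crux is the following lemma: under these hypotheses, $\pi_m = 1$ globally. To prove it, I would first observe that $p$ being $321$-avoiding with $p_m = 1$ forces $p_1, \ldots, p_{m-1}$ to be increasing, since any decreasing pair there combined with $p_m = 1$ would already form a $321$ pattern in $p$; because $p$ contains an inversion we have $m \geq 2$ and $p_1 > 1$, so $\pi_1 > \pi_m$. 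If the global minimum of $\pi$ lay at some position $\ell > k$, then $(\pi_1, \pi_m, \pi_\ell)$ would form a $321$ pattern in $\pi$, a contradiction. Granted the lemma, $\Phi$ is plainly $321$-avoidance-preserving, and its inverse shifts every value of $\sigma$ up by one and inserts $1$ at position $m$; this inverse also preserves $321$-avoidance precisely because the increasing block $p_1, \ldots, p_{m-1}$ prevents the new $1$ from serving as the ``$1$'' of a $321$. The prefix-flattening condition matches since $m \leq k-1$ (the eligible endpoint $p_k$ is the largest entry of $p$, not $1$), and winnability transports automatically since deletion sends position $k$ to position $k-1$ and the value $N$ to the value $N-1$.

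I expect the most delicate step to be the identification of the denominator in the identity case with the ballot number $\C(N, k)$; while classical, this requires either setting up a careful bijection with lattice paths or verifying the ballot recurrence with attention to boundary conditions. By contrast, once the lemma forcing $\pi_m = 1$ is in place, the inversion case bijection and the winnability correspondence essentially take care of themselves.
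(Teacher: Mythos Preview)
Your approach is essentially the paper's: for $p = [12\cdots k]$ you count the numerator directly (choosing the $k-1$ values below $N$ in the prefix, with the suffix forced to be increasing) and identify the denominator with a ballot number; the paper does the latter by verifying the ballot recurrence exactly as you suggest. For the inversion case both you and the paper argue that the global value $1$ must lie in the prefix and then delete it.

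There is one slip in your lemma. You assert that ``because $p$ contains an inversion we have $m \geq 2$,'' but this is false: $p = [1324]$ is eligible, $321$-avoiding, contains the inversion $(3,2)$, yet has $m = 1$. Your subsequent use of $(\pi_1, \pi_m)$ as the witnessing descent then breaks down. The fix is to use the given inversion in $p$ directly, which is what the paper does: if $p_i > p_j$ with $i < j \leq k$ is any inversion in the prefix and the global $1$ sits at some position $\ell > k$, then every prefix value is at least $2$, so $\pi_i > \pi_j \geq 2 > 1 = \pi_\ell$ is a $321$ pattern. Your observation that $p_1, \ldots, p_{m-1}$ are increasing is still correct and is exactly what is needed to show the inverse map preserves $321$-avoidance, so the rest of your argument goes through unchanged once this step is repaired.
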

\begin{proof}
In any permutation $\pi$ with prefix $p$ that contains an inversion, the
position of the entry $1$ in $\pi$ is fixed:  it must occur in the prefix or
else it would create a $321$ instance when paired with the inversion in the
prefix.  Hence, removing the $1$ from this position is a bijection to the
corresponding subtree of $N-1$ prefixes.

Next, suppose $p$ has the form $[12 \cdots k]$ for some $k$.  We prove the
formulas for the denominator and numerator separately.  We claim the
denominators are ballot numbers.  To prove this, we show that they satisfy the
same recurrence.  Notice that there two possibilities for any $\pi$ of size $N$
having prefix $[12 \cdots k]$:  
\begin{itemize}
    \item If the element in the next position after the prefix is larger than the maximum value
in the prefix, then $\pi$ is counted by the prefix $[12 \cdots (k+1)]$ in $N$.  
    \item Otherwise, the next element is smaller and so $\pi$ is counted by the
        prefix $[12 \cdots (k-1)]$ in $N-1$ after applying the $\check{p}$ bijection.
\end{itemize}
The initial conditions are that we only have one element with prefix $[12 \cdots N]$
in $N$.  Hence, the denominators agree with the ballot numbers we defined in
Section~\ref{s:os}.

For the numerator, let $\pi$ be a $p$-winnable permutation with $p$ still
equal to $[12 \cdots k]$.  If $\pi$ has $N$ in position $k$ then everything
after $N$ must be increasing (to avoid $321$) so we just have to choose the
subset of values to place in the prefix.  These are counted by the binomial ${
{N-1} \choose {k-1}}$.
\end{proof}

In the prefix tree of rank $N$, let $\TB^\c(N,k)$ be the ``open'' subforest
lying under (but not including) $[12 \cdots k]$ and let $\TB(N,k)$ be the
``closed'' subtree lying under (and including) $[12 \cdots k]$.  Then we may
interpret the first part of Theorem~\ref{t:321_tree} as an isomorphism of
prefix ``forests'' (i.e. disjoint unions of trees).

\begin{corollary}\label{c:321isom}
We have a bijection
\[  \TB^\c(N,k) \cong \TB(N,k+1) \cup \bigcup_{i=1}^{k} \TB^\c(N-i, k+1-i) \]
where each nonzero strike probability on the left side occurs for the
isomorphic image on the right side.  In particular, we may obtain an optimal
strike set for the forest on the left side as the union of (the isomorphic
images of) optimal strike sets for each tree in the forest on the right side.
\end{corollary}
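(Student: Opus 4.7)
The plan is to decompose $\TB^\c(N,k)$ according to the $k+1$ immediate children of $[12\cdots k]$ in the $N$-tree, and then reduce each inversion-containing child via iterated application of the map $\check{p}$ from Theorem~\ref{t:321_tree}. First, identify the children: the increasing prefix $[12\cdots (k+1)]$ contributes the closed subtree $\TB(N, k+1)$, while the remaining $k$ children have the form $q_j = [1, 2, \ldots, j-1, j+1, \ldots, k+1, j]$ for $j = 1, \ldots, k$, each carrying the inversion $(k+1, j)$.

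Every prefix extending $q_j$ inherits this inversion, so Theorem~\ref{t:321_tree} applies uniformly on the subtree rooted at $q_j$ in the $N$-tree, and $\check{p}$ gives a tree isomorphism from this subtree to a subtree in the $(N-1)$-tree that preserves all nonzero strike probabilities. A direct computation shows $\check{q_1} = [12\cdots k]$ in the $(N-1)$-tree, and for $j \geq 2$, $\check{q_j}$ is the $(j-1)$-th inversion-containing child of $[12\cdots(k-1)]$ in the $(N-1)$-tree. Iterating this reduction $j$ times in total, the subtree rooted at $q_j$ in the $N$-tree becomes isomorphic as a tree to the closed subtree $\TB(N-j, k+1-j)$ in the $(N-j)$-tree.

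The only delicate point is the bookkeeping at the terminal root of the iteration. The root $q_j$ of the LHS subtree is never a left-to-right maximum, so it carries $\S_N(q_j) = 0$ and does not contribute to $\bigoplus$; however, the corresponding terminal root $[12\cdots(k+1-j)]$ of $\TB(N-j, k+1-j)$ is eligible with nonzero strike probability. To make the bijection preserve every nonzero strike probability, we discard this terminal root on the RHS and record the contribution of the $q_j$-subtree as $\TB^\c(N-j, k+1-j)$. Summing over $j = 1, \ldots, k$ and adding the $\TB(N,k+1)$ piece from the increasing child yields the claimed decomposition.

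For the final assertion, recall that $\mathcal{P}(A) = \bigoplus_{p \in A} \S_N(p)$ is componentwise addition on pairs (numerator, denominator); since the bijection preserves both components of each nonzero $\S_N(p)$, the task of choosing an optimal strike set on $\TB^\c(N,k)$ decomposes into independent optimization problems on $\TB(N,k+1)$ and on each $\TB^\c(N-i, k+1-i)$, so the union of optimal strike sets from the right-hand forests is optimal on the left. The main obstacle will be to compute the images $\check{q_j}$ explicitly and verify that the iteration terminates correctly at $[12\cdots(k+1-j)]$, together with carefully handling the root-eligibility issue noted above.
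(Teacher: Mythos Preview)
Your proof is correct and follows essentially the same approach as the paper's. Both arguments decompose $\TB^\c(N,k)$ via the $k+1$ children of $[12\cdots k]$, send the increasing child to $\TB(N,k+1)$, and handle each inversion-containing child $q_j$ by iterating the $\check{p}$ map from Theorem~\ref{t:321_tree} exactly $j$ times to land on $[12\cdots(k+1-j)]$ at rank $N-j$; both then note that the ineligible root $q_j$ contributes zero strike probability so the mismatch with the eligible root of $\TB(N-j,k+1-j)$ is harmless, yielding the open subforest $\TB^\c(N-j,k+1-j)$ on the right. Your version is somewhat more explicit about the intermediate computations of $\check{q_j}$ and about the decomposition of the optimization problem, while the paper phrases the root issue as an ``abuse of notation'' on the left side rather than a discarded root on the right, but these are the same argument.
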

\begin{proof}
Note that the children of $[12 \cdots k]$ in the prefix tree consist of the
permutations of size $k+1$ that are increasing for the first $k$ entries and
end with some value $i = 1, 2, \ldots, k+1$.  Only the $[12 \cdots (k+1)]$
child ends with a left-to-right maximum so we apply the $\check{p}$ bijection
$i$ times to identify each of the other children with an increasing prefix from
some smaller rank.  Since none of these children are eligible in rank $N$,
though, we only transfer open subforests, not the node itself.  (Hence, we are
comitting an abuse of notation by including the nonincreasing children of $[12
\cdots k]$ themselves on the left side of the bijection; however, this does not
affect any of the nonzero strike probabilities or winning strategies.)

As explained in the proof of Theorem~\ref{t:321_tree}, the inverse of
$\check{p}$ simply inserts a new lowest entry into the position where $1$
appears in $p$.  In particular, the inverse image of a subforest $\TB^\c(N-i,
k+1-i)$ from the right side consists of the prefixes on the left side with
their $i$ smallest entries in fixed position.
\end{proof}

\begin{example}
Consider $N = 5$.  Iterating the bijection gives
\[  \TB^\c(5,1) \cong \TB(5,2) \cup \TB^\c(4, 1) \cong \TB(5,3) \cup \TB^\c(4,2) \cup \TB^\c(3,1) \cup \TB^\c(4, 1) \]
\[  \cong \TB(5,4) \cup \TB^\c(4,3) \cup \TB^\c(3,2) \cup \TB^\c(2,1) \cup \TB^\c(4,2) \cup \TB^\c(3,1) \cup \TB^\c(4, 1) \]
For example, $\TB^\c(3,2)$ here corresponds to the subforest under the prefix
$[1342]$ in $N = 5$.  (The bijection inserts value $1$ into the first position
and $2$ into the fourth position of each of the three prefixes under $[12]$ in $N
= 3$.)  We find that $[1234]$ is the optimal strike in $\TB(5,4)$ so we can stop here.
We may obtain the other strikes from smaller ranks using the bijection.
\end{example}

\bigskip
\subsection{Optimal strategy}
Let $\B^\c_N(p)$ denote the probability of success using an optimal strategy
for the interview orderings restricted to the $321$-avoiding open subforest
under (but not including) the prefix $p$.  The following corollary permits
$\B^\c$ to be computed recursively.

\begin{corollary}\label{c:defrec}
We have
\[  \B_N^\c(12 \cdots k) = \max(\B_N^\c(12 \cdots (k+1)), \S_N(12 \cdots (k+1))
) \oplus \bigoplus_{i=1}^{k} \B_{N-i}^\c( 12 \cdots (k+1-i) ). \]
Equivalently,
\[  \B_N^\c(12 \cdots k) = \B_{N-1}^\c(12 \cdots (k-1)) \oplus \left( \B_{N-1}^\c(12 \cdots k) - \max(\B_{N-1}^\c(12 \cdots k),
\S_{N-1}(12 \cdots k)) \right) \]
\[ \oplus \max(\B_N^\c(12 \cdots (k+1)), \S_N(12 \cdots (k+1))). \] 
\end{corollary}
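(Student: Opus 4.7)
The first formula is a direct translation of the tree decomposition in Corollary~\ref{c:321isom}. My plan is as follows. First I would establish a general principle: the optimal probability on a disjoint union of prefix forests equals the $\oplus$-sum of the optimal probabilities on each piece. This holds because any strike set on a disjoint union decomposes into independent strike sets on each component, and the pair-valued formula $\mathcal{P}(A) = \bigoplus_{p \in A} \S_N(p)$ sums numerators and denominators across components. Combined with the isomorphism
\[ \TB^\c(N,k) \cong \TB(N,k+1) \cup \bigcup_{i=1}^{k} \TB^\c(N-i, k+1-i), \]
which preserves nonzero $\S_N$ values, the optimal probability for the open subforest on the left is then the $\oplus$-sum of the optimal probabilities on each component on the right.

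Next I would analyze the components separately. For each open subforest $\TB^\c(N-i, k+1-i)$, the optimal probability is by definition $\B_{N-i}^\c(12 \cdots (k+1-i))$. For the closed subtree $\TB(N,k+1)$, any strike set (which is required to be an antichain) either contains the root $[12 \cdots (k+1)]$, contributing exactly $\S_N(12 \cdots (k+1))$ and precluding further strikes in that subtree, or omits it, in which case the best possible contribution from the remaining open subforest is $\B_N^\c(12 \cdots (k+1))$ by definition. Since these two cases partition all strike sets on the closed subtree, the optimal contribution is the maximum of the two, and $\oplus$-combining all components gives the first recurrence.

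For the second formula my plan is purely algebraic: apply the first recurrence to $\B_{N-1}^\c(12 \cdots (k-1))$, then reindex with $j=i+1$ to identify $\bigoplus_{i=1}^{k-1} \B_{N-1-i}^\c(12 \cdots (k-i))$ with $\bigoplus_{j=2}^{k} \B_{N-j}^\c(12 \cdots (k+1-j))$, and solve for this reindexed tail. Substituting back into the first formula for $\B_N^\c(12 \cdots k)$ peels off the $i=1$ term as $\B_{N-1}^\c(12 \cdots k)$ and replaces the remaining tail with $\B_{N-1}^\c(12 \cdots (k-1)) - \max(\B_{N-1}^\c(12 \cdots k), \S_{N-1}(12 \cdots k))$, yielding the claimed second recurrence.

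The main obstacle I anticipate is bookkeeping in the $\oplus$-algebra: the ``$-$'' in the second formula must be read as the formal inverse of $\oplus$, i.e.\ componentwise subtraction of the (numerator, denominator) pairs, not as ordinary rational subtraction. This is well-defined precisely because the two subtracted quantities share a common denominator (both enumerate the size-$(N-1)$ permutations with prefix flattening $[12 \cdots k]$), so the difference is a pair with denominator $0$ whose sole role is to correct the numerator in the subsequent $\oplus$ sum. Once this convention is fixed and Corollary~\ref{c:321isom} is in hand, the derivation of both recurrences is essentially routine.
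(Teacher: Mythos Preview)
Your proposal is correct and follows exactly the approach the paper intends: the paper's proof is the single sentence ``This follows directly from the definitions and Corollary~\ref{c:321isom},'' and your write-up simply unpacks that sentence, including the necessary observation that the formal $-$ in the second recurrence is componentwise subtraction of (numerator, denominator) pairs. There is nothing to add.
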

\begin{proof}
This follows directly from the definitions and Corollary~\ref{c:321isom}.
\end{proof}

In particular, $\B_N^\c(1)$ gives the probability of success under the optimal
strategy for the entire collection of $321$-avoiding interview orders.  The
$\B_N^\c(12 \cdots k)$ numerators for $N \leq 16$ are shown in
Figure~\ref{f:tris}; the denominators are all ballot numbers (not shown).  

\begin{figure}[h]
{\tiny
\begin{tabular}{cllllllllllllllll}
$N \backslash k$ & $1$ & $2$ & $3$ & $4$ & $5$ & $6$ & $7$ & $8$ & $9$ & $10$ & $11$ & $12$ & $13$ & $14$ & $15$ \\
\hline \\
& & \\
2 &\bf 1$\ ^*$ \\
3 & 3 &\bf 1 \\
4 & 8 & 5 &\bf 1 \\
5 & 23 & 15 & 7 &\bf 1 \\
6 & 71 & 48 & 25 &\bf 9$\ ^*$ & 1 \\
7 & 229 & 158 & 87 & 39 &\bf 11 & 1 \\
8 & 759 & 530 & 301 & 143 & 56 &\bf 13 & 1 \\
9 & 2568 & 1809 & 1050 & 520 & 219 & 76 &\bf 15 & 1 \\
10 & 8833 & 6265 & 3697 & 1888 & 838 & 318 & 99 &\bf 17 & 1 \\
11 & 30797 & 21964 & 13131 & 6866 & 3169 & 1281 & 443 & 125 &\bf 19 & 1 \\
12 & 108613 & 77816 & 47019 & 25055 & 11924 & 5058 & 1889 & 608 &\bf 154$\ ^*$ & 21 & 1 \\
13 & 386804 & 278191 & 169578 & 91762 & 44743 & 19688 & 7764 & 2706 & 817 &\bf 186 & 23 & 1 \\
14 & 1389109 & 1002305 & 615501 & 337310 & 167732 & 75970 & 31227 & 11539 & 3775 & 1069 &\bf 221 & 25 & 1 \\
15 & 5024945 & 3635836 & 2246727 & 1244422 & 628921 & 291611 & 123879 & 47909 & 16682 & 5143 & 1368 & \bf 259 & 27 & 1 & \\
16 & 18292738 & 13267793 & 8242848 & 4607012 & 2360285 & 1115863 & 486942 & 195331 & 71452 & 23543 & 6861 & 1718 & \bf 300 & 29 & 1 & \\
\end{tabular} }
\caption{The first few rows of the $\B^\c$ triangle}\label{f:tris}
\end{figure}

Say that an entry $(N,k)$ is {\bf optimal} if $\S_N(12 \cdots k) > \B_N^\c(12
\cdots k)$.  Such an entry represents the root of a subtree for which the
optimal strike set is simply the prefix itself, so such a prefix is ``locally''
optimal.  The {\bf optimal boundary} in the $\B^\c$ triangle is the collection
consisting of the leftmost optimal entry from each row and we have highlighted
these in bold in Figure~\ref{f:tris}.  Such entries represent increasing
prefixes that are globally optimal, so appear in the optimal strike set for $N$.

We now prove an important structural result about these entries, namely, that
the optimal boundary proceeds by diagonal and vertical steps as $N$ increases.

\begin{theorem}\label{t:optimal}
If $(N,k)$ is optimal then so is $(N+1, k+1)$.  Also, if $(N,k)$ is not optimal then neither is $(N+1, k)$.
\end{theorem}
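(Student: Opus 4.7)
The plan is to introduce notation $\sigma_N^k$ for the numerator of $\S_N(12\cdots k)$, $\beta_N^k$ for the numerator of $\B_N^{\c}(12\cdots k)$ (both over the common denominator $\delta_N^k = \C(N,k)$), $\mu_N^k := \max(\sigma_N^k, \beta_N^k)$, and $f(N,k) := \sigma_N^k - \beta_N^k$, so that $(N,k)$ is optimal precisely when $f(N,k) > 0$. By Theorem~\ref{t:321_tree} we have $\sigma_N^k = \binom{N-1}{k-1}$, so Pascal's identity yields $\sigma_{N+1}^{k+1} = \sigma_N^k + \sigma_N^{k+1}$. Unfolding Corollary~\ref{c:defrec} and collapsing the two cases of whether $(N-1,k)$ is optimal produces the single compact identity $\beta_{N+1}^{k+1} = \beta_N^k + \mu_{N+1}^{k+2} - \max(0, f(N, k+1))$, which together with Pascal gives the master recursion
\[
f(N+1, k+1) = f(N, k) + \sigma_N^{k+1} + \max(0, f(N, k+1)) - \mu_{N+1}^{k+2}.
\]

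I will proceed by strong induction on $N$, proving the two assertions of the theorem together with the auxiliary within-row claim
\[
\text{(c)}\quad (N, k) \text{ optimal} \implies (N, k+1) \text{ optimal}.
\]
The observation that makes this tractable is that (c) at level $N$ is a \emph{free} consequence of the two theorem assertions at level $N-1$: the contrapositive of the second gives $(N, k)$ optimal $\Rightarrow (N-1, k)$ optimal, and then the first gives $(N-1, k)$ optimal $\Rightarrow (N, k+1)$ optimal. The second assertion at level $N$ then follows directly from (c): if $f(N, k) \leq 0$ then by the contrapositive of (c) also $f(N, k-1) \leq 0$, so every term of the analogous master recursion for $f(N+1, k)$ is visibly non-positive, using only the tautology $\mu_{N+1}^{k+1} \geq \sigma_{N+1}^{k+1} = \sigma_N^{k-1} + \sigma_N^k$.

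The main work is the first assertion. Given $(N, k)$ optimal, (c) yields $(N, k+1)$ optimal. Carrying out a downward inner induction on $k$ within row $N$ lets me assume the first assertion at index $k+1$, so $(N+1, k+2)$ is optimal and $\mu_{N+1}^{k+2} = \sigma_{N+1}^{k+2} = \sigma_N^{k+1} + \sigma_N^{k+2}$. Substituting into the master recursion, it collapses to the crisp identity
\[
f(N+1, k+1) = f(N, k) + f(N, k+1) - \sigma_N^{k+2},
\]
and the problem reduces to establishing the strict inequality $f(N, k) + f(N, k+1) > \sigma_N^{k+2}$ for every optimal $(N, k)$.

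The hard part will be proving this strict inequality. The natural tactic is to strengthen the inductive invariant to $f(N, k) \geq \sigma_N^{k+2}$ for all optimal $(N, k)$; combined with $f(N, k+1) > 0$ from (c), this would close the argument. The derived in-row recursion $f(N, k) = f(N-1, k-1) + f(N-1, k) - \sigma_{N-1}^{k+1}$ (which likewise holds throughout the optimal region, for the same reason the master recursion simplifies) propagates this invariant cleanly by Pascal-style telescoping whenever $(N-1, k-1)$ is also optimal. The delicate case is a ``vertical'' transition of the optimal boundary, where $(N-1, k-1)$ is not optimal and its negative $f$-value contributes adversarially; numerically this is exactly where the strengthened invariant becomes tight (for instance $f(6,4) = \sigma_6^6 = 1$ and $f(12,9) = \sigma_{12}^{11} = 11$). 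Closing the induction at these vertical transitions is where the bulk of the effort will go, and may require a direct combinatorial argument via the bijection of Corollary~\ref{c:321isom} rather than pure manipulation of the recursions.
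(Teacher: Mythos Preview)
Your setup is clean and correct: the master recursion, the auxiliary claim (c), the derivation of the second assertion from (c), and the simplification $f(N+1,k+1)=f(N,k)+f(N,k+1)-\sigma_N^{k+2}$ in the optimal region all check out.  The approach is genuinely different from the paper's, which never works with the differences $f=\sigma-\beta$ at all; instead it proves the \emph{ratio} monotonicity $\beta_{N-1}^{k-1}/\beta_N^{k}\geq \sigma_{N-1}^{k-1}/\sigma_N^{k}=(k-1)/(N-1)$ for \emph{all} $(N,k)$, not only optimal ones, via a simultaneous induction on three ratio inequalities (one involving the auxiliary quantity $\bar b=\mu_N^k+\beta_{N-1}^{k-1}-\mu_{N-1}^{k-1}$).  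That global monotonicity immediately gives both statements of the theorem.

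There is, however, a real gap in your plan beyond the one you flag.  Your strengthened invariant $f(N,k)\geq\sigma_N^{k+2}$ does \emph{not} propagate via the in-row recursion, even in the ``easy'' diagonal case where $(N-1,k-1)$ is optimal.  From $f(N-1,k-1)\geq\sigma_{N-1}^{k+1}$ and $f(N-1,k)\geq\sigma_{N-1}^{k+2}$ you obtain only
\[
f(N,k)=f(N-1,k-1)+f(N-1,k)-\sigma_{N-1}^{k+1}\;\geq\;\sigma_{N-1}^{k+2},
\]
whereas Pascal gives $\sigma_N^{k+2}=\sigma_{N-1}^{k+1}+\sigma_{N-1}^{k+2}$, so you are short by exactly $\sigma_{N-1}^{k+1}$.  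Concretely, at $(N,k)=(7,5)$ the inductive inputs $f(6,4)\geq 1$ and $f(6,5)\geq 0$ yield only $f(7,5)\geq 0$, not the required $f(7,5)\geq\sigma_7^7=1$.  (Indeed, if you set $g(N,k):=f(N,k)-\sigma_N^{k+2}$, the same recursion $g(N,k)=g(N-1,k-1)+g(N-1,k)-\sigma_{N-1}^{k+1}$ reproduces itself with the same subtracted term, so no shift of the invariant telescopes away the deficit.)  Thus the difficulty is not confined to the vertical transitions: an additive invariant of this shape cannot close the induction anywhere.  This is precisely why the paper switches to multiplicative (ratio) bounds, which are compatible with the recursion's structure; if you want to rescue your additive framework you would need a substantially different auxiliary inequality.
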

\begin{proof}
For the first statement in the theorem, note that it is enough to prove that
the ratios $\frac{\B^\c_N(12 \cdots k)}{\S_N(12 \cdots k)}$ are decreasing as
we move down diagonals; once $\frac{\B^\c}{\S} < 1$, it then remains so for all
subsequent entries along the same diagonal.  Equivalently, we show that the
ratios $\frac{\B^\c_{N-1}(12 \cdots (k-1))}{\B^\c_N(12 \cdots k)}$ are bounded
below by $\frac{\S_{N-1}(12 \cdots (k-1))}{\S_N(12 \cdots k)} =
\frac{k-1}{N-1}$.

For the remainder of this proof, we abuse notation to refer to the numerators
only; the denominators in each probability are ballot numbers which appear on
both sides of the inequality so can be canceled.  Following the ballot number
conventions, we also extend the $\B^\c$ triangle to include an extra column on
the left, defining $\B^\c_N(\emptyset) = \B^\c_N(1)$.

To avoid a profusion of closely related subscripts, we will let letters denote
positions in the $\B^\c$ and $\S$ triangles so each letter corresponds to a
particular $(N,k)$ pair as defined in Figure~\ref{f:co} .  If the letter is
unadorned, it represents the $\B^\c$ value at that position.  If it has a bar,
it represents the complementary term in a recurrence for $\B^\c$, namely the
$\max(\B^\c, \S)$ value at that position plus the $\B^\c-\max(\B^\c,\S)$ value
at the position just northwest.  Observe that this enables us to write 
\[ \po{x} = \po{b} + \pc{y} \] 
by Corollary~\ref{c:defrec}.  Finally, we represent the $\S$ values by capital
letters (to emphasize that these are simply binomial coefficients by
Theorem~\ref{t:321_tree}).  

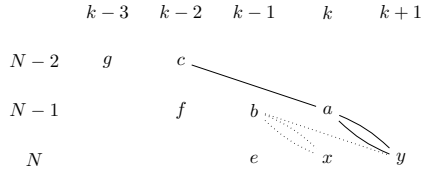
\begin{figure}[h]
\scalebox{0.65}{ \begin{tikzpicture}
\draw (1.5,5) node {$k-3$};
\draw (3,5) node {$k-2$};
\draw (4.5,5) node {$k-1$};
\draw (6,5) node {$k$};
\draw (7.5,5) node {$k+1$};
\draw (0,4) node {$N-2$};
\draw (0,3) node {$N-1$};
\draw (0,2) node {$N$};
\draw (1.5,4) node {$g$};
\draw (3,4) node (c) {$c$};
\draw (3,3) node {$f$};
\draw (4.5,3) node (b) {$b$};
\draw (6,3) node (a) {$a$};
\draw (4.5,2) node {$e$};
\draw (6,2) node (x) {$x$};
\draw (7.5,2) node (y) {$y$};
\path[-] (a)  edge [bend left=10] (y);
\path[-] (a)  edge [bend right=10] (y);
\draw (c) -- (a);
\draw [dotted] (b) -- (y);
\path[-] (b)  edge [dotted,bend left=10] (x);
\path[-] (b)  edge [dotted,bend right=10] (x);
\end{tikzpicture} }
\caption{Letter coordinates and key ratios for the induction argument}\label{f:co}
\end{figure}

Applying the defining recurrence (twice), we have
$\po{x} = \po{b} + \pc{y}$ and $\po{b} = \po{c} + \pc{a}$, so 
\begin{equation}\label{e:key}
    \frac{\po{b}}{\po{x}} = \frac{\po{b}}{\po{b}+\pc{y}} = \frac{1}{1+\frac{\pc{y}}{\po{b}}} =
    \frac{1}{1+\frac{\pc{y}}{\po{c}+\pc{a}}} 
    = \frac{1}{1+\frac{1}{\frac{\po{c}}{\pc{y}} + \frac{\pc{a}}{\pc{y}} } } 
    = \frac{1}{1+\frac{1}{ \frac{\pc{a}}{\pc{y}} \left( \frac{\po{c}}{\pc{a}} + 1\right) } }.
\end{equation}
Now, if we produce lower bounds for $\frac{\po{c}}{\pc{a}}$ and
$\frac{\pc{a}}{\pc{y}}$ then this expression gives a lower bound for
$\frac{\po{b}}{\po{x}}$.  So, {\em we claim the following lower bounds}, to be proved
by induction:
\begin{equation}\label{e:ind}
    \frac{\po{b}}{\po{x}} \geq \frac{B}{X}, \hspace{0.2in} \frac{\po{b}}{\pc{y}} \geq
\frac{B}{X-B}, \text{ and } \hspace{0.2in} \frac{\pc{b}}{\pc{x}} \geq
\frac{F-G}{E-F}.
\end{equation}
If valid, we say that an inequality holds {\bf at} $b$, the entry appearing in the
numerator on the left side.  Recall that the first bound is our primary goal in
this proof (but we evidently require the others to facilitate it).

For the base case, we solve the recurrences to find explicit formulas along the
rightmost diagonals:  we find that $\B^\c_N(N-1) = 1$ and $\B^\c_N(N-2) = 2N-3$
for all $N \geq 3$.  The bar value $\pc{y}$ is $2$ at position $(N,N-1)$ for
all $N \geq 3$ and $\pc{x}$ is $3N-7$ at position $(N,N-2)$ for all $N \geq 7$.

The binomial formula for $\S$ from Theorem~\ref{t:321_tree} yields
\[ \frac{B}{X} = \frac{k-1}{N-1}, \hspace{0.2in} \frac{B}{X-B} =
\frac{k-1}{N-k}, \hspace{0.2in} \frac{F-G}{E-F} = \frac{(k-2)}{(N-2) } =
\frac{C}{B}. \]

Then we can verify each of the claimed inequalities from (\ref{e:ind}) directly for $k = N-2$, $N \geq 7$:  
\[ \frac{\po{b}}{\po{x}} = \frac{2N-5}{2N-3} \geq \frac{N-3}{N-1} = \frac{B}{X}, \]
\[ \frac{\po{b}}{\pc{y}} = \frac{2N-5}{2} \geq \frac{N-3}{2} = \frac{B}{X-B}, \text{ and } \]
\[ \frac{\pc{b}}{\pc{x}} = \frac{3N-10}{3N-7} \geq \frac{N-4}{N-2} = \frac{F-G}{E-F}. \]

Now assume that we have all three of the inequalities from (\ref{e:ind}) at
entries along the current diagonal including the entry $c$ and at all entries on
diagonals to the right including the entry $a$.  We now derive each of the
bounds at $b$.

By the induction hypothesis,
\[ \frac{\pc{a}}{\pc{y}} \geq \frac{B-C}{X-B} \ \ \text{  and  } \ \  \frac{\po{c}}{\pc{a}} \geq \frac{C}{B-C}. \]
Substituting these into our expression from Equation~(\ref{e:key}), we obtain
\[ \frac{\po{b}}{\po{x}} = \frac{1}{1+\frac{1}{ \frac{\pc{a}}{\pc{y}} \left( \frac{\po{c}}{\pc{a}} + 1\right) } } \geq  
   \frac{1}{1+\frac{1}{ \frac{B-C}{X-B} \left( \frac{C}{B-C} + 1\right) } } = 
   \frac{1}{1+\frac{1}{ \frac{B-C}{X-B} \left( \frac{B}{B-C} \right) } } = 
   \frac{1}{1+\frac{1}{ \frac{B}{X-B} } }.  \]
When we simplify this expression, we find that this is precisely equal to $\frac{B}{X}$, as required.
Moreover, this calculation already verifies the second bound as
\[ \frac{\po{b}}{\pc{y}} = \frac{\pc{a}}{\pc{y}} \left( \frac{\po{c}}{\pc{a}} +
1\right) \geq \frac{B-C}{X-B} \left(  \frac{C}{B-C} + 1\right) = \frac{B}{X-B}. \]

Finally, we evaluate $\frac{\pc{b}}{\pc{x}}$ in cases.  Recall,
\[ \pc{b} = \max(\B^\c, \S)(b) + \B^\c(c) - \max(\B^\c,\S)(c) \]
The argument up to here already shows if $\max(\B^\c, \S) = \B^\c$ for entry
$c$, $b$, or $x$, then it remains so for all entries above to the northwest.  So the cases for
\[ \frac{\pc{b}}{\pc{x}} = \frac{\max(\B^\c, \S)(b) + \B^\c(c) - \max(\B^\c,\S)(c)}{\max(\B^\c, \S)(x) + \B^\c(b) - \max(\B^\c,\S)(b)}  \]
are:  none of $c$, $b$, or $x$ have $\max = \B^\c$; $c$ alone does; $b$ and $c$ do; or all three do.

For the first case, we claim that
\[ \frac{\pc{b}}{\pc{x}} = \frac{B + c - C}{X + b - B} \geq \frac{C}{B} = \frac{F-G}{E-F} \]
because
$\frac{c}{b} \geq \frac{C}{B}$ and $\frac{B}{X} \geq \frac{C}{B}$ so $B(B+c) \geq C(X+b)$ whence
\[ B(B+c-C) \geq C(X+b-B) \]
yielding the desired inequality.

For the other three cases, we have
$\frac{B + c - c}{X + b - B} = \frac{B}{X+b-B}  \geq \frac{B}{X}$, 
$\frac{b + c - c}{X + b - b} = \frac{b}{X} \geq \frac{B}{X}$, and
$\frac{b + c - c}{x + b - b} = \frac{b}{x}$.
For the last case, note that $\frac{\po{b}}{\po{x}} \geq \frac{B}{X}$ by the
induction argument to this point.  So in each case we have
$\frac{\pc{b}}{\pc{x}} \geq \frac{B}{X}$ and since $(k-1)(N-2) \geq (N-1)(k-2)$
for $k \leq N$, we obtain $\frac{B}{X} \geq \frac{F-G}{E-F}$ as desired.


The induction proceeds along each row from right to left.  Since
$\B^\c_N(\emptyset) = \B^\c_N(1)$, once we prove that $\frac{b}{x} \geq
\frac{B}{X}$ for $b = (N-1,1)$, we automatically get that $\frac{f}{\pc{x}} =
\frac{b}{x} \geq \frac{B}{X} = \frac{F}{E-F}$.  Thus, the induction may proceed
to the next row.

The proof of the second statement in the theorem now follows directly.
Continuing our notation, we have
\[ 1 = \frac{b + \pc{y}}{x} \geq \frac{b + a}{x} \geq \frac{B}{X} + \frac{a}{x} \]
because $\pc{y} = a + \left(\max(\B^\c, \S)(y) - \max(\B^\c,\S)(a)\right)$ and by the inequalities we already proved.
Hence,
\[ \frac{a}{x} \leq 1 - \frac{B}{X} = \frac{X-B}{X} = \frac{1}{\left(\frac{B}{X-B}\right)}
    \left(\frac{B}{X}\right) = \frac{N-k}{N-1} = \frac{A}{X}. \]
Thus, if $a$ is not optimal then $1 \leq \frac{a}{A} \leq \frac{x}{X}$ and so
$x$ is not optimal either.
\end{proof}

\begin{definition}
We say that the {\bf value-saturated left-to-right maxima} of a prefix $p = p_1
\cdots p_k$ are the largest subset of left-to-right maxima in $p$ whose values
form an interval $\{k-i+1, k-i+2, \cdots, k-1, k\}$ for some $i$.  
\end{definition}

Given a node $p = [p_1 p_2 \cdots p_k]$ from the prefix tree of $321$-avoiding
permutations of size $N$, we refer to $N$ as the {\bf rank} of $p$ and $k$ as
the {\bf size} of $p$.  Let $\sigma(i)$ be the minimal $k$ such that $(N,k)$ is
optimal among the entries where $N-k = i$.  That is, $\sigma(i)$ is the column
containing the $i$th vertical step along the optimal boundary of the $\B^\c$
table.  We know that $\sigma(i)$ is a well-defined weakly increasing
function by Theorem~\ref{t:optimal}.

We say $p$ is {\bf selected by the threshold} if 
\begin{equation}\label{e:tie}
\#\text{ value-saturated left-to-right maxima in } p \geq \sigma(\text{rank}(p) - \text{size}(p)). 
\end{equation}

\begin{lemma}\label{l:bijok}
We have that $p$ is selected by the threshold if and only if $\check{p}$ is selected by the threshold.
\end{lemma}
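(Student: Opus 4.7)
The plan is to verify that both sides of \eqref{e:tie} are invariant under $p \mapsto \check{p}$. Since $\check{p}$ has rank $N-1$ and size $k-1$, we have $\text{rank}(\check{p}) - \text{size}(\check{p}) = N - k = \text{rank}(p) - \text{size}(p)$, so $\sigma$ evaluates to the same value on the right-hand side. It therefore suffices to prove that $p$ and $\check{p}$ have the same number of value-saturated left-to-right maxima.

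I would first construct a bijection $\beta$ between the LTR maxima of $p$ with value $\geq 2$ and all the LTR maxima of $\check{p}$, shifting each value down by $1$. Let $j$ be the position of value $1$ in $p$. If $j = 1$, then $p_1 = 1$ is an LTR max of $p$ with value $1$, but every other LTR max of $p$ occurs at a position $\geq 2$ and, since $1$ is smaller than all later entries, is precisely an LTR max of the suffix $p_2 p_3 \cdots p_k$. If $j > 1$, then $p_j = 1$ is not an LTR max of $p$ (because $p_1 > 1$), and deleting it does not affect the LTR max status of any other entry. In both cases, removing the entry of value $1$ from $p$ and flattening yields the desired bijection $\beta$.

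The main step is verifying that $\beta$ restricts to a bijection between the value-saturated LTR maxes. Since $\check{p}$ arises in the context of Theorem~\ref{t:321_tree} for $p$ containing an inversion, $p$ is not the identity $[1\,2\,\cdots\,k]$, so the maximal top interval $\{k-i+1,\ldots,k\}$ of LTR-max values in $p$ must satisfy $k-i+1 \geq 2$ (otherwise every value would be an LTR max, forcing $p$ to be the identity). All value-saturated LTR maxes of $p$ therefore lie in the domain of $\beta$ and map onto the LTR maxes of $\check{p}$ whose values form the top interval $\{k-i,\ldots,k-1\}$ of size $i$. Conversely, any strictly longer top interval of LTR-max values in $\check{p}$ would pull back via $\beta^{-1}$ to a longer such interval in $p$, contradicting the maximality of $i$ there; so the two counts agree and \eqref{e:tie} has the same truth value for $p$ and $\check{p}$. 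I expect the main obstacle to be formulating the maximality argument cleanly in both directions, while the casework constructing $\beta$ is routine bookkeeping.
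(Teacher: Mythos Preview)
Your proposal is correct and follows essentially the same approach as the paper: both arguments observe that $\text{rank}-\text{size}$ is unchanged, and both use the fact that $p$ has an inversion (so $p$ is not the identity, hence value $1$ cannot belong to the value-saturated block) to conclude that removing value $1$ leaves the count of value-saturated left-to-right maxima unchanged. Your version simply makes the bijection $\beta$ and the maximality argument explicit where the paper is terse.
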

\begin{proof}
Applying the $\check{p}$ bijection from Theorem~\ref{t:321_tree} removes the
lowest entry from $p$ and reduces the rank by $1$.  This does not change the
right side of the threshold inequality~(\ref{e:tie}).  The only way that removing
the lowest entry could change the number of value-saturated left-to-right
maxima in $p$ is if we removed the first entry of a prefix with the form $[12 \cdots i]$.  But
this is not possible because we require at least one inversion in $p$ in order
to apply the bijection.  Hence, we do not change the left side of the threshold
inequality either.
\end{proof}

\begin{theorem}\label{t:ssvslrm}
For any $N$ and $k$, we have that $p$ is included in the optimal strike set for
$\TB^\c(N,k)$ if and only if 
\begin{itemize}
\item $p$ is eligible,
\item $p$ is selected by the threshold, and
\item no proper prefix of $p \in \TB^\c(N,k)$ is selected by the threshold.
\end{itemize}
\end{theorem}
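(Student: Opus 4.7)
The plan is to prove the characterization by induction using the decomposition
\[ \TB^\c(N, k) \cong \TB(N, k+1) \cup \bigcup_{j=1}^k \TB^\c(N-j, k+1-j) \]
of Corollary~\ref{c:321isom} and the recurrence of Corollary~\ref{c:defrec}.  The essential step is an auxiliary fact: for any eligible prefix $p$ of size $k$ in rank $N$ with exactly $v$ value-saturated left-to-right maxima, iterating the $\check{p}$ bijection of Theorem~\ref{t:321_tree} exactly $k - v$ times reduces $p$ to the identity $[12 \cdots v]$ of size $v$ in rank $N - k + v$.  Since that bijection preserves the strike probability at each node and extends to a subforest isomorphism preserving the optimal-play probability, we obtain $\S_N(p) = \S_{N - k + v}([12 \cdots v])$ and $\B^\c_N(p) = \B^\c_{N - k + v}([12 \cdots v])$.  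Because the diagonal $N - k$ is invariant under the reduction, the threshold inequality $v \geq \sigma(N - k)$ is equivalent to $(N - k + v, v)$ being an optimal entry, which is precisely the condition that $p$ is locally optimal in the sense $\S_N(p) > \B^\c_N(p)$.

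I would then induct on the pair $(N, k)$ with strong induction on $N$ and secondary induction on $k$ descending from $N - 1$ within a fixed $N$; the base case $k = N - 1$ is immediate since $\TB^\c(N, N-1)$ contains only the single eligible prefix $[12 \cdots N]$.  For the $j = 0$ component, the auxiliary fact implies that $[12 \cdots (k+1)]$ is selected by the threshold iff $(N, k+1)$ is optimal, which is precisely when the max in Corollary~\ref{c:defrec} chooses $\S_N(12 \cdots (k+1))$ over $\B^\c_N(12 \cdots (k+1))$; when selected, the optimal strike is $\{[12 \cdots (k+1)]\}$ and every descendant is blocked by it, while when not selected, the optimal strike coincides with that of $\TB^\c(N, k+1)$ via the secondary induction (with the non-selected $[12 \cdots (k+1)]$ not obstructing minimality of descendants).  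For each $j \geq 1$ component, the subforest below the ineligible child $c_j$ of $[12 \cdots k]$ is bijective with $\TB^\c(N - j, k+1-j)$ via $j$ applications of $\check{p}$; Lemma~\ref{l:bijok} together with the preservation of eligibility under $\check{p}$ implies that the theorem's conditions pull back verbatim, and aggregation via $\oplus$ exactly reproduces Corollary~\ref{c:defrec}.

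The main obstacle is the subtlety that an ineligible prefix such as $c_j$ may itself satisfy the value-saturated threshold inequality, which would naively threaten to block its eligible descendants from being minimal.  The resolution is that a threshold strike strategy only ever strikes at eligible prefixes, so the condition ``no proper prefix selected'' is effectively about proper \emph{eligible} prefixes: an ineligible ancestor cannot be struck, so it does not obstruct a valid threshold strike below it.  Making this observation explicit is what enables the decomposition argument to align the theorem's set on $\TB^\c(N, k)$ with the union of the theorem's sets across components, matching the optimal strike set guaranteed by Corollary~\ref{c:321isom}.
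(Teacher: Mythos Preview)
Your approach is correct and essentially matches the paper's: both argue by strong induction on $N$ using the decomposition of Corollary~\ref{c:321isom}, invoking Lemma~\ref{l:bijok} for the smaller-rank pieces and checking directly whether $(N,k+1)$ is an optimal entry for the $\TB(N,k+1)$ piece. Your explicit secondary descending induction on $k$ (together with the auxiliary reduction of any eligible $p$ to an increasing prefix) makes fully transparent a step the paper leaves somewhat implicit---namely how the non-increasing branches inside $\TB(N,k+1)$ are handled---but the underlying argument is the same.
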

\begin{proof}
We argue by strong induction on $N$.  For $N = 1$ and $N = 2$, we have that the
prefix $[1]$ is an optimal strike and it is selected by the threshold in each
case.  So suppose the result holds for all ranks $M < N$.

Recalling Corollary~\ref{c:321isom}, we may view $\TB^\c(N,k)$ as a disjoint union.
Any prefix from $\bigcup_{i=1}^{k} \TB^\c(N-i, k+1-i)$ will be selected by the
threshold if and only if it is optimal, by our induction hypothesis and
Lemma~\ref{l:bijok}.

So it suffices to show that the prefixes from $\TB(N,k+1)$ conform to the
threshold.  Let $j$ be the leftmost optimal entry in row $N$ of the $\B^\c$
table.  If $k+1 \leq j$ then $[12 \cdots j]$ is in the optimal strike set by
definition.  Otherwise, we claim that $[12 \cdots (k+1)]$ is in the optimal
strike set.  This follows from the fact that the strike numerators for
increasing prefixes are binomial coefficients so this prefix must have the
largest strike numerator in the subtree under $[12 \cdots (k+1)]$ (or else we
contradict $k+1 > j$ because the set of optimal entries on each row of the
$\B^\c$ table is an interval by Theorem~\ref{t:optimal}).

Now, since the number of value-saturated left-to-right maxima in an increasing
prefix is equal to its size, we find that an increasing prefix is selected by
the threshold precisely when it corresponds to an optimal entry $(N,k)$ in the
$\B^\c$ table.  Thus, the prefixes from $\TB(N,k+1)$ conform to the threshold.
\end{proof}

\begin{corollary}\label{c:321_strat}
For $321$-avoiding interview orders, the optimal strategy is a threshold strike
strategy using the number of value-saturated left-to-right maxima as the
statistic.  Moreover, a single threshold function works simultaneously for all $N$.
\end{corollary}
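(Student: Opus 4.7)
The plan is to derive this corollary almost entirely from Theorem~\ref{t:ssvslrm} by specializing to $k=1$. The full game of $321$-avoiding best choice plays out on the subforest rooted at $[1]$, which coincides with $\TB^\c(N,1)$ together with that root; moreover the root is an eligible prefix whose strike probability is dominated by its descendants for $N$ large enough, so it does not affect the optimal strike set. Applying Theorem~\ref{t:ssvslrm} with $k=1$ therefore gives exactly the characterization that a prefix $p$ belongs to the globally optimal strike set if and only if $p$ is eligible, $p$ is selected by the threshold~(\ref{e:tie}), and no proper prefix of $p$ in the tree is selected by that threshold.

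Next I would translate this characterization into the language of \emph{threshold strike strategies} introduced in Section~\ref{s:os}. For each prefix $p$, take the statistic to be the number of value-saturated left-to-right maxima of $p$, and take the threshold function to be $\sigma(\mathrm{rank}(p)-\mathrm{size}(p))$, where $\sigma$ is the function defined just before Lemma~\ref{l:bijok}. The condition from Theorem~\ref{t:ssvslrm} says exactly that the strike fires at the first eligible prefix along a branch whose statistic meets or exceeds this threshold, which is the defining behavior of a threshold strike strategy. Eligibility is automatic at a strike because an increasing prefix always ends in a left-to-right maximum and, more generally, value-saturated left-to-right maxima require the last position of $p$ to be a left-to-right maximum whenever the saturation count is positive.

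Finally, for the clause ``a single threshold function works simultaneously for all $N$,'' I would verify that $\sigma$ is intrinsic to the diagonal index $i = N-k$ and does not secretly depend on $N$. By Theorem~\ref{t:optimal}, optimality propagates along diagonals of the $\B^\c$ table: once $(M,k)$ is optimal, so is every $(M+j,k+j)$. Hence, for each fixed $i$, the set of $M$ for which $(M, M-i)$ is optimal is an up-set, and $\sigma(i)$ is the single well-defined value $M-i$ attained at its minimum. Consequently $\sigma$ is a function of $i$ alone, and the same $\sigma$ is the correct threshold for every $N$.

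I expect the only real obstacle here to be bookkeeping: reconciling the forest-indexed statement of Theorem~\ref{t:ssvslrm} with the single-game statement we want, and checking that the quantity ``value-saturated left-to-right maxima'' behaves as a monotone statistic along branches so that the ``first time the threshold is met'' prescription is well-posed. Both are routine once the definitions are unpacked, so no new combinatorial input should be required beyond Theorems~\ref{t:optimal} and~\ref{t:ssvslrm}.
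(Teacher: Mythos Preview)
Your approach is essentially the paper's one-line proof: apply Theorem~\ref{t:ssvslrm} with $k=1$. Your additional unpacking of why $\sigma$ depends only on the diagonal index $i=N-k$ (via Theorem~\ref{t:optimal}) is correct and worth making explicit, since the paper states this just before Lemma~\ref{l:bijok} without further comment.

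However, your claim that ``eligibility is automatic at a strike'' because ``value-saturated left-to-right maxima require the last position of $p$ to be a left-to-right maximum whenever the saturation count is positive'' is false. For instance, $p=[132]$ has exactly one value-saturated left-to-right maximum (the value $3$), yet its last entry $2$ is not a left-to-right maximum. Relatedly, the value-saturated count is not monotone along branches of the prefix tree: passing from $[12]$ (count $2$) to its child $[132]$ (count $1$) gives a decrease. Fortunately, neither of these claims is needed. Theorem~\ref{t:ssvslrm} already lists eligibility as a separate hypothesis, and the paper's description of the strategy immediately following the corollary explicitly restricts attention to eligible prefix flattenings; the ``first time the threshold is met'' prescription is well-posed simply because it is a stopping rule, not because the statistic is monotone. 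You should drop these two remarks rather than attempt to justify them.
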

\begin{proof}
This follows by applying Theorem~\ref{t:ssvslrm} to $\B^\c(N,1)$.  
\end{proof}

To play the optimal strategy on a given interview ordering $\pi$, reject
candidates until we are at some eligible prefix flattening, $\pi|_{[k]}$, where
we have seen $\sigma(N-k)$ value-saturated left-to-right maxima.  Then select
the $k$th candidate.  From small values of $N$, we can compute the beginning of
the optimal threshold function precisely.  The first few values are:
\[ \sigma(N) = \sigma(N-1) = 1, \sigma(N-2) = 4, \sigma(N-3) = 9, \sigma(N-4) = 16, \ldots. \] 
These rules say that by the time you get to interview $k = N$ or $k = N-1$, you
should always accept the $k$th interview candidate.  However, when $N$ is large
enough to permit it and you have seen an ``unusually high'' number of
value-saturated left-to-right maxima, it can also be optimal to accept an
earlier candidate.

The complete list of rules (compiled for all $N \leq 10000$) indicate that the
threshold function has the form $\sigma(N-i) = i^2$ for all $i \geq 1$.  We do know
that the rules we computed for these ``small'' values of $N$ remain in force for
all $N$ by Theorem~\ref{t:optimal}.  It would be interesting to obtain the
asymptotic threshold function and probability of success precisely.

\subsection{An asymptotic lower bound}

In this section we show how to compute the probability of success
for the particular threshold strategy defined by using the first four rules,
$\sigma(N) = \sigma(N-1) = 1$, $\sigma(N-~2) = 4$, and $\sigma(N-3) = 9$,
from the optimal strategy.  This threshold strategy has an associated triangle
of ``interior'' probabilities that we denote $\B^\c_{(1,4,9)}(N,k)$.  The
only difference between this triangle and the $\B_N^\c(12 \cdots k)$ triangle
from the last subsection is that the optimal boundary (i.e. the leftmost
entries where $\S > \B^\c_{(1,4,9)}$) follows the $k=N-3$ diagonal forever
and so all entries on or left of the $k=N-5$ diagonal are obtained from the
recurrence 
\[ \B^\c_{(1,4,9)}(N,k) = \B^\c_{(1,4,9)}(N-1,k-1) + \B^\c_{(1,4,9)}(N,k+1). \]  
This is the same recurrence that is satisfied by the ballot numbers, and it
turns out we can write $\B^\c_{(1,4,9)}$ as a linear combination of ``shifted''
ballot numbers.  This allows us to compute the first entry in each row
precisely, which gives the total probability of success.

\begin{definition}
Define the {\bf $i$-shifted ballot triangle $\C_{i}(N,k)$} to be the result of
replacing $N$ by $N-i$ in the ballot number formula from Section~\ref{s:os}
(where we interpret any binomial coefficients with negative indices as zero).
\end{definition}

\begin{figure}[t]
{\tiny
\begin{tabular}{lllllllllllllllllll}
$N \backslash k$ & $1$ & $2$ & $3$ & $4$ & $5$ & $6$ & $7$ & $8$ & $9$ & $10$ & $11$ & $12$ & $13$ & $14$ \\
\hline \\
2 & 4 & \\
3 & -1 & 4 & \\
4 & 2 & 3 & 4 & \\
5 & 13 & 9 & \bf 7 & 4 & \\
6 & 151 & 33 & 20 & 11 & 4 & \\
7 & 164 & 219 & 68 & 35 & 15 & 4 & \\
8 & 764 & 505 & 341 & 122 & 54 & 19 & 4 & \\
9 & \bf 2568 & \bf 1809 & 1045 & 540 & 199 & 77 & 23 & 4 & \\
10 & \bf 8833 & \bf 6265 & \bf 3697 & \bf 1888 & 843 & 303 & 104 & 27 & 4 & \\
11 & \bf 30797 & \bf 21964 & \bf 13131 & \bf 6866 & \bf 3169 & \bf 1281 & 438 & 135 & 31 & 4 & \\
12 & \bf 108613 & \bf 77816 & \bf 47019 & \bf 25055 & \bf 11924 & \bf 5058 & \bf 1889 & \bf 608 & 170 & 35 & 4 & \\
13 & \bf 386804 & \bf 278191 & \bf 169578 & \bf 91762 & \bf 44743 & \bf 19688 & \bf 7764 & \bf 2706 & \bf 817 & 209 & 39 & 4 & \\
14 & \bf 1389109 & \bf 1002305 & \bf 615501 & \bf 337310 & \bf 167732 & \bf 75970 & \bf 31227 & \bf 11539 & \bf 3775 & \bf 1069 & 252 & 43 & 4 & \\
15 & \bf 5024945 & \bf 3635836 & \bf 2246727 & \bf 1244422 & \bf 628921 & \bf 291611 & \bf 123879 & \bf 47909 & \bf 16682 & \bf 5143 & \bf 1368 & 299 & 47 & 4 & \\
16 & \bf 18292738 & \bf 13267793 & \bf 8242848 & \bf 4607012 & \bf 2360285 & \bf 1115863 & \bf 486942 & \bf 195331 & \bf 71452 & \bf 23543 & \bf 6861 & \bf 1718 & 350 & 51 & 4 & \\
\end{tabular}  
}
\caption{The linear combination of ballot numbers that agrees with $\B^\c_{(1,4,9)}$}\label{f:lcb}
\end{figure}

\begin{theorem}\label{t:lcb}
We have that $\B^\c_{(1,4,9)}(N,k)$ agrees with the triangle 
\[ 4 \C_1(N,k) - 9 \C_2(N,k) + 2 \C_4(N,k) + 105 \C_5(N,k) - 206 \C_6(N,k) + 95 \C_7(N,k) - 5 \C_8(N,k). \]
for all $N \geq 11$ and $1 \leq k \leq N-5$.
\end{theorem}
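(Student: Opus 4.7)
The plan is to observe that both sides of the claimed identity satisfy the ballot recurrence $f(N,k) = f(N-1,k-1) + f(N,k+1)$ inside the region $\{(N,k) : N \geq 11,\ 1 \leq k \leq N-5\}$, and then verify them on a boundary from which the recurrence extends equality to the entire region. For the left-hand side $\B^\c_{(1,4,9)}(N,k)$ this recurrence is precisely the defining rule of the triangle in this region. For the right-hand side, write $R(N,k)$ for the displayed linear combination and observe that each shifted ballot number $\C_i(N,k) = \C(N-i,k)$ satisfies the standard ballot recurrence with $N$ replaced by $N-i$, i.e., $\C_i(N,k) = \C_i(N-1,k-1) + \C_i(N,k+1)$, so $R(N,k)$ satisfies the same recurrence by linearity.

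Since both sides obey the same linear recurrence, their difference $D := \B^\c_{(1,4,9)} - R$ also does, and it suffices to verify $D \equiv 0$ on boundary cells whose values propagate to cover the region. I would use two pieces: (i) the base row $N = 11$ at the positions $k = 1, \ldots, 6$, and (ii) the rightmost diagonal of the region, namely $(N, N-5)$ for all $N \geq 12$. Given (i) and (ii), an outer induction on $N$ together with an inner induction on decreasing $k$ inside each row (seeded at $k = N-5$ by (ii) and drawing on the row above by the outer hypothesis) forces $D = 0$ throughout the region via $D(N,k) = D(N-1,k-1) + D(N,k+1)$. Part (i) is a finite arithmetic check, readable from the entries of row $11$ in Figure~\ref{f:lcb} against $R(11,k)$ evaluated from the explicit ballot formula.

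The main obstacle is part (ii). I would handle it by applying Corollary~\ref{c:defrec} at the interface between the ballot-recurrence region and the strike columns at $k = N-3$ and $k = N-4$; since the optimal boundary of $\B^\c_{(1,4,9)}$ sits on the diagonal $k = N-3$, we have $\B^\c_{(1,4,9)}(N,N-3) = \S_N(12 \cdots (N-3))$, a binomial coefficient by Theorem~\ref{t:321_tree}. Unwinding the recurrence expresses $\B^\c_{(1,4,9)}(N,N-5)$ as a closed-form sum of binomial coefficients together with contributions from the lower-rank strike-adjacent diagonals. Meanwhile $R(N,N-5)$ expands directly via the shifted-ballot formula $\C_i(N,N-5) = \frac{N-4}{N-i+1}\binom{N-2i+5}{N-i}$, and the seven nonzero coefficients $4, -9, 2, 105, -206, 95, -5$ of the linear combination are precisely engineered so that the two closed forms agree; verifying the resulting binomial identity should reduce to a finite manipulation (or a generating-function argument exploiting that the shifted ballot numbers form a basis of solutions to the ballot recurrence).
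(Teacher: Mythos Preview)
Your overall architecture coincides with the paper's: both $\B^\c_{(1,4,9)}$ and $R$ satisfy the ballot recurrence in the region, hence so does $D$, and one kills $D$ on a boundary and propagates by the double induction you describe (with the minor caveat that at $k=1$ the step $D(N,1)=D(N-1,0)+D(N,2)$ needs the convention $\B^\c(N,0)=\B^\c(N,1)$ invoked in the proof of Theorem~\ref{t:optimal}). The divergence is entirely in how you handle part (ii), the diagonal $k=N-5$.

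The paper never computes $\B^\c_{(1,4,9)}(N,N-5)$ or $R(N,N-5)$ in closed form. Instead it observes that the ballot recurrence itself forces the sequence along each diagonal to obey a constant-coefficient linear recurrence in $N$: writing $x_N = X(N,N-d)$, the relation $X(N,k)=X(N-1,k-1)+X(N,k+1)$ gives $x_N - x_{N-1} = X(N,N-d+1)$, so the backward difference of diagonal $d$ equals diagonal $d-1$. Iterating, diagonal $d$ satisfies a recurrence whose order is bounded by how many steps it takes to reach a constant diagonal; since diagonal $1$ is constant for both triangles, diagonal $5$ of each satisfies a recurrence of order about five. Hence agreement at six consecutive terms on that diagonal forces agreement for all $N$, and the entire theorem collapses to the single finite check displayed in Figure~\ref{f:lcb}. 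Your plan---unwind Corollary~\ref{c:defrec} across the strike boundary to extract explicit polynomials on diagonals $1$ through $5$ and then match against the shifted-ballot expansion---could in principle be completed, but you have not actually done the unwinding (it is several layers deep, each diagonal being an inhomogeneous recurrence driven by the previous one plus a binomial strike numerator), and your ``should reduce to a finite manipulation'' is precisely the labor that the paper's diagonal-recurrence observation eliminates.
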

\begin{proof}
As we have observed, all of the triangles under discussion satisfy the recurrence 
\[ X(N,k) = X(N-1,k-1) + X(N,k+1) \]
at least in the region of $(N,k)$ values lying on or to the left of the
$k=(N-5)$th diagonal.  But this can be translated to recurrences for the
entries along each particular diagonal.

Let $x_N = X(N,N-k)$ be the sequence of entries along a particular diagonal.
When $k = 2$, we have $x_{N+1} = x_{N} + c$.  Successively replacing each $x_N$
with its difference $x_N - x_{N-1}$ we obtain a recurrence for the next diagonal
to the left.  For $k = 3$, for example, we obtain $(x_{N+1}-x_N) = (x_N -
x_{N-1}) + c$ which reduces to $x_{N+1} = 2x_N - x_{N-1} + c$.  In general, the
recurrence for entries along the $k$th diagonal will have degree $k$ (and its
coefficients will be alternating binomial coefficients; see \cite{concrete}).

Hence, once we have agreement between $\B^\c_{(1,4,9)}$ and the linear
combination of shifted ballot numbers for six terms along the $k = N-5$
diagonal, we must have agreement forever along this diagonal and therefore for
all entries along diagonals to the left.  A finite computation illustrated in
Figure~\ref{f:lcb} shows that this indeed occurs for $N \geq 11$ and $k \leq
N-5$.
\end{proof}

\begin{corollary}\label{c:321_main}
The asymptotic probability of success under the optimal strategy for the
$321$-avoiding interview orders is at least $32983/65536$ which is about
$0.5032806396484$.
\end{corollary}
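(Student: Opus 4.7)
The plan is to evaluate $\lim_{N \to \infty} \B^\c_{(1,4,9)}(N,1)/\C_N$ using the closed form from Theorem~\ref{t:lcb}. Since the $(1,4,9)$ threshold strategy is a concrete (but not necessarily globally optimal) strike strategy, the ratio $\B^\c_{(1,4,9)}(N,1)/\C_N$ is exactly its probability of success and hence a lower bound for the success probability of the truly optimal strategy at each $N$; its limit therefore bounds the asymptotic optimal success probability from below.

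First I would specialize Theorem~\ref{t:lcb} to $k = 1$ (the hypotheses $N \geq 11$ and $1 \leq k \leq N-5$ reduce to $N \geq 11$ in this case) and use the identification $\C_i(N,1) = \C(N-i,1) = \C_{N-i}$, which follows from the definition of the shifted ballot triangle together with the relation $\C(M,1) = \C_M$ recorded in Section~\ref{s:os}. This rewrites the closed form as
\[ \B^\c_{(1,4,9)}(N,1) = 4\C_{N-1} - 9\C_{N-2} + 2\C_{N-4} + 105\C_{N-5} - 206\C_{N-6} + 95\C_{N-7} - 5\C_{N-8}. \]

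Next, the explicit formula $\C_N = \binom{2N}{N}/(N+1)$ together with the standard asymptotic $\binom{2N}{N} \sim 4^N/\sqrt{\pi N}$ yields $\C_{N-i}/\C_N \to 4^{-i}$ for each fixed $i$. Since only finitely many terms appear, the limit passes through the sum to give
\[ \lim_{N \to \infty} \frac{\B^\c_{(1,4,9)}(N,1)}{\C_N} = \frac{4}{4} - \frac{9}{16} + \frac{2}{256} + \frac{105}{1024} - \frac{206}{4096} + \frac{95}{16384} - \frac{5}{65536}. \]
Clearing by the common denominator $4^8 = 65536$ and summing the resulting signed integer numerators produces $32983/65536 \approx 0.5032806396484$, as claimed.

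All of the real work has been carried out by Theorem~\ref{t:lcb}, so the only remaining obstacle is bookkeeping in the final arithmetic. The one conceptual point worth flagging is simply that $\B^\c_{(1,4,9)}(N,1)/\C_N$ is genuinely a lower bound on the optimal success probability at each $N$, which is immediate because $\B^\c_{(1,4,9)}$ arises as the exact success probability of a particular complete strike strategy built from a subset of the rules that define the optimal $\sigma$.
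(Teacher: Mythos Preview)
Your proof is correct and follows essentially the same route as the paper: specialize Theorem~\ref{t:lcb} to $k=1$, divide by $\C_N$, and take the limit using $\C_{N-i}/\C_N \to 4^{-i}$. You add a bit more explicit justification (the identification $\C_i(N,1)=\C_{N-i}$ and the lower-bound remark), but the argument is the same.
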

\begin{proof}
In the linear combination of shifted ballot numbers from Theorem~\ref{t:lcb}, set $k =
1$, divide by the $N$th Catalan number, and take the limit as $N \rightarrow
\infty$.  We obtain 
\[ 4 \left(\frac{1}{4}\right)^1 - 9 \left(\frac{1}{4}\right)^2 + 2
\left(\frac{1}{4}\right)^4 + 105 \left(\frac{1}{4}\right)^5 - 206
\left(\frac{1}{4}\right)^6 + 95 \left(\frac{1}{4}\right)^7 - 5 \left(\frac{1}{4}\right)^8. \]
\end{proof}

\subsection{Other strategies}

We have also investigated positional and trigger strategies for $321$-avoiding
interview orders.  Although they are not generally optimal, it may be convenient
to compare them more directly with the classical best choice game analysis.
Positional strategies are also simpler to implement and require less memory.  In
this subsection, we briefly outline the main results.

The $321$-avoiding permutations are completely determined by the values and
positions of their left-to-right maxima (as the complementary entries must be
increasing to avoid $321$).  These left-to-right maxima may be encoded by Dyck
paths on an $N \times N$ grid lying above the diagonal.  The interviews that
are $k$-winnable (where $k$ represents a position to transition from rejection
to hiring) then correspond to Dyck paths whose next to last horizontal segment
passes through column $k$.  These can be counted recursively to obtain a
success probability in the form of a linear combination of ratios of Catalan
numbers.  For example, the positional strategy of transitioning after $k = N-3$
has a success probability of 
\[ \frac{3 \C_{N-1} - 4 \C_{N-2} - \C_{N-3}}{\C_N}. \]
The main result in this direction is that for all $N > 8$, transitioning from
rejection to hiring after $k = N-3$ turns out to be the optimal positional
strategy and has limiting value $\frac{31}{64} = 0.484375$.  Further details
are given in \cite{fowlkes}.

More generally, we considered trigger strategies for the $321$-avoiding
interviews.  Replacing Theorem~\ref{t:321_tree}, we have
\begin{theorem}
For the $321$-avoiding interview orders, we have
\[ \T_N(p) = \begin{cases}
\T_{N-1}(\check{p}) & \mbox{ if $p$ contains at least one inversion } \\
\ & \ \\
\frac{ k { {N-1} \choose {k+1} } + { {N-1} \choose {k} } } { \frac{k+1}{N+1}  { {2N-k} \choose {N} } } & \mbox{ if $p = [12 \cdots k]$\ (with $k=0$ corresponding to $p = \emptyset$) } \\
\end{cases} \]
where $\T_N(p)$ is the probability of winning (restricted to the subtree under
$p$) if $p$ is included in the trigger set.
\end{theorem}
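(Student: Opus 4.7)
The plan is to mirror the proof of Theorem~\ref{t:321_tree}, splitting on the same dichotomy and reusing the $\check{p}$ bijection in the inversion case. When $p$ contains an inversion, value $1$ must sit inside the prefix of every $321$-avoiding completion $\pi$, since otherwise $1$ would form a $321$-instance with the inversion in $p$. Deleting $1$ and re-flattening therefore gives a bijection between the $321$-avoiders of size $N$ with prefix flattening $p$ and those of size $N-1$ with prefix flattening $\check{p}$, and this bijection matches denominators, as already noted in Theorem~\ref{t:321_tree}. The main point to check is that it preserves the trigger-win condition, namely that the last entry of $p$ lies between the last two left-to-right maxima of $\pi$. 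Since value $1$ lies at some position $i \leq k$ inside $p$, removing it shifts all subsequent positions down by one; in particular the positions of $k$, of the penultimate LRM (call it $j_{-1}$), and of the terminal LRM $N$ (call it $j$) all decrease by the same amount, so the inequality $j_{-1} \leq k < j$ transfers verbatim. The only degenerate scenario is when value $1$ itself serves as the penultimate LRM of $\pi$; a short analysis forces $\pi = [1, N, \ldots]$ with only two LRMs, and such a $\pi$ is trigger-winnable only when $k = 1$, that is, $p = [1]$, which has no inversion, so this case does not interfere.

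For the increasing case $p = [12 \cdots k]$, the denominator is the ballot number $\C(N,k)$ inherited from Theorem~\ref{t:321_tree}, so only the numerator requires work. Since the first $k$ entries of $\pi$ are increasing in value, they are the first $k$ left-to-right maxima, with some values $v_1 < v_2 < \cdots < v_k$. Trigger-winnability requires that the next LRM after position $k$ be $N$ at some position $j > k$, which forces each entry in positions $k+1, \ldots, j-1$ to be strictly less than $v_k$. The standard structural fact for $321$-avoiders, that the non-LRM entries appear in increasing order across the non-LRM positions (else an earlier LRM would combine with two descending non-LRMs to form a $321$), then completely determines the permutation from the data $(v_1, \ldots, v_k; j)$. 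The only feasibility constraint is that there be enough non-LRM values below $v_k$ to fill positions $k+1, \ldots, j-1$; since exactly $v_k - k$ such small non-LRM values are available, this amounts to $j \leq v_k + 1$.

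Summing over these parameters and writing $m = v_k$ yields the numerator
\[
\sum_{m=k}^{N-1} \binom{m-1}{k-1}(m - k + 1).
\]
Splitting $(m - k + 1) = (m+1) - k$, using the identity $m\binom{m-1}{k-1} = k\binom{m}{k}$, and applying the hockey-stick identity to each resulting piece collapses the sum to $k\binom{N-1}{k+1} + \binom{N-1}{k}$, matching the claimed numerator.

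The main obstacle will be the structural description in the increasing case: one must verify that the recipe ``place the non-LRM values in increasing order across the non-LRM positions,'' parametrized by $(v_1, \ldots, v_k; j)$ with $j \leq v_k + 1$, always produces a valid $321$-avoiding permutation whose LRM set is exactly $\{1, 2, \ldots, k, j\}$, with no stray LRMs appearing in positions $k+1, \ldots, j-1$ or beyond $j$. Once this bijection is confirmed, the binomial identity collapse is routine, and the inversion case reduces immediately via the $\check{p}$ bijection.
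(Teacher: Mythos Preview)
The paper does not actually supply a proof for this theorem; it is stated in the ``Other strategies'' subsection and left to the reader as the trigger analogue of Theorem~\ref{t:321_tree}. Your approach is exactly the intended one: reuse the $\check{p}$ bijection for the inversion case and do a direct LRM-based count for the increasing case.

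The argument is correct, but one sentence is slightly loose. In the inversion case you assert that the positions of $k$, of $j_{-1}$, and of $j$ ``all decrease by the same amount'' when value $1$ is deleted from position $i\le k$. That is only literally true when $i\le j_{-1}$; if $j_{-1}<i\le k$ then $j_{-1}$ does not shift while $k$ and $j$ each drop by one. Fortunately the inequality $j_{-1}\le k<j$ still transfers to $j_{-1}\le k-1<j-1$ in that subcase, because $j_{-1}<i\le k$ already forces $j_{-1}\le k-1$. So the conclusion stands, but you should replace the ``same amount'' claim with a short two-case check.

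For the increasing case, your bijection between trigger-winnable $\pi$ and pairs $(v_1<\cdots<v_k;\,j)$ with $k<j\le v_k+1$ is exactly right. The validity direction you flagged as the ``main obstacle'' follows from the standard fact that a $321$-avoider is determined by its LRM positions and values, with the non-LRM entries forced to be increasing; the only feasibility constraint is that each non-LRM block fit below the preceding LRM value, which here reduces to the single inequality $j\le v_k+1$ since positions $1,\ldots,k$ contain no non-LRM slots at all. The binomial collapse via $m\binom{m-1}{k-1}=k\binom{m}{k}$ and hockey-stick is clean; if you want to be thorough, note that your parametrization implicitly assumes $k\ge 1$, and handle $k=0$ separately (there $\pi_1=N$ forces $\pi=[N,1,2,\ldots,N-1]$, giving numerator $1=\binom{N-1}{0}$ on the nose).
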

One can define an analogous $\B_N^\c$ triangle with slightly simpler recurrence 
\[ \B_N^\c(12 \cdots k) = \B_{N-1}^\c(12 \cdots (k-1)) \oplus \max(\B_N^\c(12 \cdots (k+1)), \T_N(12 \cdots (k+1)) ). \]
The optimal trigger sets can be translated to a statistical trigger strategy
based on value-saturated left-to-right maxima, proved along the same lines as we
have done for our strike strategy.  The optimal trigger threshold function grows
similarly to a quadratic but does not seem to satisfy a simple formula.  The
first few values are 
\[ \sigma(N-2) = 1, \sigma(N-3) = 3, \sigma(N-4) = 8, \sigma(N-5) = 15, \sigma(N-6) = 25, \sigma(N-7) = 36, \ldots. \] 
For $N \geq 12$, it turns out that the optimal trigger strategy is not optimal
overall.  However, a lower bound (proved similarly to Theorem~\ref{t:lcb}) for the
asymptotic probability of success using the optimal trigger strategy is
$8239/16384$, which is about $0.50286865$.

\subsection{Bijection for $312$-avoiding}

In this subsection, we explain why the $321$-avoiding prefix tree is isomorphic
to the $312$-avoiding prefix tree.  This is essentially ``West's bijection'' of
generating trees described in \cite{west2} and \cite{ck_bij} although some of
the position/value conventions are different; we give a self-contained account
here for completeness.

Given a prefix permutation of size $N-1$, there are potentially $N$ children in
the prefix tree, each corresponding to a value in the last position which we
refer to as {\bf index} of the child.  The other values and positions in the
child are then completely determined by the parent prefix.

These indices are determined by inversions in the parent permutation.  We will
refer to an inversion by its values which we denote by $(b>a)$.

\begin{lemma}\label{l:3ch}
Fix a prefix permutation $\pi$ of size $N-1$ in the $321$-avoiding prefix tree.  
Then, the indices for the children of $\pi$ are
\[ \{1, 2, \ldots, N\} \setminus \{j : j \leq a \text{ for an inversion }
(b>a) \text{ in } \pi\}. \]
(So it suffices to consider the inversion with the largest minimal value.)

Fix a prefix permutation $\pi$ of size $N-1$ in the $312$-avoiding prefix tree.  
Then, the indices for the children of $\pi$ are
\[ \{1, 2, \ldots, N\} \setminus \{j : a < j \leq b \text{ for an inversion } (b>a) \text{ in } \pi\}. \]
\end{lemma}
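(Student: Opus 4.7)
The plan is to argue directly from the definitions. A child of $\pi$ in the prefix tree of rank $N$ is a permutation of size $N$ obtained by appending a new entry and flattening, so it is determined by the index $j \in \{1, 2, \ldots, N\}$ of the final entry: every value of $\pi$ that is at least $j$ is shifted upward by one, and $j$ is placed in the last position. Because $\pi$ itself already avoids the forbidden pattern, the child fails to do so if and only if there is a new pattern instance that uses the final position.

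For the $321$-avoiding tree, such an instance consists of two earlier entries in the child forming a decreasing pair, with the appended value playing the role of the smallest letter. Tracking the shift, these two earlier entries correspond to an inversion $(b > a)$ of $\pi$ together with an appended index $j$ satisfying $j \leq a$. A short case analysis on the regimes $j \leq a$, $a < j \leq b$, and $j > b$ confirms that the appended entry creates a $321$ only in the first case (in the other two the resulting triple is a $312$ or $123$ pattern, respectively). Taking the union over all inversions $(b > a)$ yields the stated forbidden set, and since the condition $j \leq a$ depends only on $a$, this union is controlled entirely by the inversion with the largest minimal value.

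For the $312$-avoiding tree the approach is identical, but the appended entry must play the \emph{middle} role of the new $312$ instance at the last position, which pins the forbidden range down to $a < j \leq b$ (the boundary $j = b$ needs a quick check using the shift, since the shifted value $b+1$ together with $a$ and the appended $j$ gives the triple $b+1 > j > a$, indeed a $312$ pattern). Unioning over all inversions produces the stated forbidden set. The entire argument is a direct verification; the only subtlety is carefully tracking how appending $j$ shifts the values of $\pi$, and I expect no substantive obstacle beyond this bookkeeping.
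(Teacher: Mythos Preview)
Your proposal is correct and takes essentially the same approach as the paper, which offers only the one-line proof ``It is straightforward to verify that these conditions directly encapsulate the pattern avoidance criteria.'' One inconsequential slip: in the $321$-avoiding case with $j > b$, the triple $(b,a,j)$ forms a $213$ pattern rather than a $123$; either way it is not a $321$, so the conclusion stands.
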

\begin{proof}
It is straightforward to verify that these conditions directly encapsulate the pattern avoidance criteria.
\end{proof}

\begin{corollary}
The child indices form a nested decreasing sequence of subsets along any path in
the prefix tree.
\end{corollary}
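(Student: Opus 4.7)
The plan is to read the claim through Lemma~\ref{l:3ch}, which expresses the forbidden indices at each node purely in terms of the inversions of the current prefix. Since extending a prefix can only accumulate inversions (after the natural relabeling of values), the excluded sets should grow monotonically along any path, equivalently the child-index sets nest decreasingly in the complementary sense. Concretely, passing from $\pi$ of size $N-1$ to a child $\pi'$ of size $N$ by appending a new last element of value $v$ relabels each value $\geq v$ in $\pi$ by adding $1$, so every old inversion $(b>a)$ of $\pi$ becomes $(b',a')$ of $\pi'$ with $b'\geq b$ and $a'\geq a$, and appending $v$ creates new inversions $(u>v)$ for every value $u>v$ remaining in $\pi'$.

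For the $321$-avoiding tree the monotonicity is immediate: each inversion $(b>a)$ contributes the initial segment $\{1,\ldots,a\}$ to the excluded set, and the relabeled inversion contributes $\{1,\ldots,a'\}\supseteq\{1,\ldots,a\}$, so the excluded set only grows. For the $312$-avoiding tree the bookkeeping is more delicate since each $(b>a)$ contributes the interval $\{a+1,\ldots,b\}$. If $v>b$ nothing changes; if $a<v\leq b$ then $v\in\{a+1,\ldots,b\}$ would already be excluded in $\pi$ by Lemma~\ref{l:3ch}, so this case cannot arise for a valid child index. The subtle case, which I expect to be the main obstacle, is $v\leq a$: the relabeled interval becomes $\{a+2,\ldots,b+1\}$ and fails to contain $a+1$. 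The fix is that the relabeled value of $a$ is $a+1$, and since $a+1>v$ and $a+1$ lies to the left of $v$ in $\pi'$, this produces a \emph{new} inversion $(a+1,v)$ whose contribution $\{v+1,\ldots,a+1\}$ recovers the missing endpoint, so jointly the two inversions of $\pi'$ cover $\{a+1,\ldots,b\}$ and more.

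Iterating along any path then yields a chain of inclusions of excluded sets inside $\mathbb{N}$, which is equivalent to the nested structure asserted by the corollary. To close the argument I would also note that the newly available top index $N+1$ is always a valid child index of $\pi'$, since appending the largest value creates no new inversions and the excluded set is always contained in $\{1,\ldots,N\}$ for $312$-avoiding (or $\{1,\ldots,N-1\}$ for $321$-avoiding). Thus the child-index set changes from $\pi$ to $\pi'$ only by dropping some elements into the growing excluded set while gaining the new top value, which is precisely the nested decreasing pattern described in the statement.
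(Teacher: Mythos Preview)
The paper states this corollary without proof, treating it as immediate from Lemma~\ref{l:3ch}. You have supplied the details correctly, and your argument is exactly the natural one: track how the excluded set from Lemma~\ref{l:3ch} changes when you pass from a prefix $\pi$ to a child $\pi'$, using that appending a value can only create new inversions (after relabeling) and never destroy old ones. Your case analysis for the $312$-avoiding tree is accurate; in the case $v\le a$ you could alternatively have used the single new inversion $(b+1,v)$, whose excluded interval $\{v+1,\ldots,b+1\}$ already covers $\{a+1,\ldots,b\}$, but your two-inversion argument with $(a+1,v)$ and $(b+1,a+1)$ works just as well.
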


\begin{theorem}
The prefix tree for $321$-avoiding permutations is isomorphic to the prefix
tree for $312$-avoiding permutations.
\end{theorem}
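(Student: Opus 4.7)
The plan is to construct the isomorphism as a \emph{label-preserving} bijection between the two generating trees. For a prefix $\pi$ of size $k$, define its label $\ell(\pi)$ to be the number of children of $\pi$. By Lemma~\ref{l:3ch}, this equals $(k+1) - a_{\max}(\pi)$ in the $321$-avoiding tree (where $a_{\max}(\pi)$ is the largest minimum value of any inversion of $\pi$, or $0$ if none exists) and $(k+1) - |\bigcup_{(b>a)} (a,b]|$ in the $312$-avoiding tree.

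The central step is to show that both trees satisfy the same \emph{succession rule}: a node of label $\ell$ has exactly one child of each label in $\{2, 3, \ldots, \ell+1\}$. For the $321$-avoiding tree this follows by a direct calculation. Inserting $v = k+1$ creates no new inversion (since $v$ becomes a new maximum) and yields the unique child of label $\ell+1$; inserting any $v \in \{a_{\max}(\pi)+1, \ldots, k\}$ creates new inversions whose minimum value is $v$, so the child has $a_{\max}$ equal to $v$ and label $(k+2) - v$, sweeping out $\ell, \ell-1, \ldots, 2$. For the $312$-avoiding tree, listing the allowed indices as $v_1 < v_2 < \cdots < v_\ell$ (necessarily with $v_1 = 1$ and $v_\ell = k+1$), I would argue that inserting $v_j$ yields a child of label $j+1$. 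The main obstacle is proving this second claim: one must carefully track how the union of forbidden intervals $\bigcup (a,b]$ evolves when values are shifted under the insertion, since these intervals can nest or overlap in nontrivial ways, so a case analysis on the position of $v_j$ relative to the inversion structure of $\pi$ seems unavoidable.

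Granting the common succession rule, define the bijection $\Psi$ inductively by $\Psi([1]) = [1]$ and by sending each child of $\pi$ to the unique child of $\Psi(\pi)$ bearing the same label. Preservation of $\S_N(p)$ then follows from three observations: (i) eligibility is preserved, because a prefix is eligible precisely when it is the label-$(\ell+1)$ child of its parent in either tree, namely the insertion that places the new maximum at the end; (ii) the denominator of $\S_N(p)$ counts level-$N$ descendants of $p$ and so is determined by the labeled subtree structure; and (iii) for eligible $p$ of size $k$, a level-$N$ descendant is $p$-winnable exactly when its downward path from $p$ never takes the label-$(\ell+1)$ child, because any such step inserts a new overall maximum and thereby fails to promote the rank of the value at position $k$. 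Thus both components of $\S_N(p)$ are invariants of the labeled subtree, and $\Psi$ carries $\S_N(p)$ to $\S_N(\Psi(p))$.
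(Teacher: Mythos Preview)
Your proposal is correct and follows essentially the same route as the paper: both construct the isomorphism as West's generating-tree bijection by verifying the common succession rule $\ell \mapsto \{2,\ldots,\ell+1\}$ and pairing children by label (the paper's explicit pairing $c_i \leftrightarrow \tilde{c}_{m-i}$, $c_m \leftrightarrow \tilde{c}_m$ is exactly your label-matching map), then deducing that $\S_N$ is preserved because eligibility and $p$-winnability are determined by which steps on the root-to-node path take the maximal-label child. The gap you flag for the $312$-avoiding succession rule is closed by the same count the paper gives: inserting $v_j$ creates the new inversion $(N>v_j)$, which forbids precisely the old indices $\tilde{c}_{j+1},\ldots,\tilde{c}_m$ while leaving $\tilde{c}_1,\ldots,\tilde{c}_j$ (all $\le v_j$, hence unshifted and still outside every forbidden interval) and the new index $N+1$ allowed, giving label $j+1$.
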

\begin{proof}
We will actually claim a little more in order to establish the isomorphism.
First, we claim that the number of children for a permutation $\pi$ in the
$321$-avoiding prefix tree is the same as the number of children in the
isomorphic node of the $312$-avoiding prefix tree.  This means the denominators
of the strike probabilities remain the same.  Second, we claim that the positions (but
not necessarily the values!) of the left-to-right maxima of $\pi$ in the
$321$-avoiding prefix tree are equal to the positions of the left-to-right
maxima of the isomorphic node of the $312$-avoiding prefix tree.  This means
the position of $N$, and hence winnability, is preserved so the numerators of
the strike probabilities remain the same.

We work by induction on permutation size.  The claims are true for permutations
of size $<3$, establishing a base case.

Suppose that $\pi$ is a $321$-avoiding permutation of size $N-1$ with child
indices 
\[ c_1, c_2, \ldots, c_{m-1} = N-1, c_m = N \]
(arranged increasingly).  By induction, there exists a corresponding
$312$-avoiding permutation $\widetilde{\pi}$ with the same number of children,
say 
\[ \widetilde{c}_1 = 1, \widetilde{c}_2, \ldots, \widetilde{c}_m = N \]
(also arranged increasingly), and having the same positions for left-to-right
maxima as $\pi$.

We now show that each of the children of $\pi$ also has a corresponding element
in the $312$-avoiding prefix tree.  The child $c_m = N$ corresponds to the child
$\widetilde{c}_m = N$ and each child has the same indices as its parent since no
new inversions are created.  Hence, the first claim is satisfied by induction.
Also, $N$ will be a new left-to-right maximum, so the second claim is satisfied
by induction.

Otherwise, we claim that the child $c_i$ will correspond to the child
$\widetilde{c}_{m-i}$.  To see this, note that the value $c_i$ will form the
minimal entry of a new inversion $(N > c_i)$ in the child permutation.  Hence,
by Lemma~\ref{l:3ch}, the child $c_i$ will need to remove $c_1, c_2, \ldots,
c_i$ from its set of indices.

Similarly for the $312$-avoiding tree, the value $\widetilde{c}_{m-i}$ forms the
minimal entry of a new inversion $(N > \widetilde{c}_{m-i})$ so the child
$\widetilde{c}_{m-i}$ will need to remove $\widetilde{c}_{m-i+1},
\widetilde{c}_{m-i+1}, \ldots, \widetilde{c}_{m}$ by Lemma~\ref{l:3ch}, which is
also a total of $i$ indices removed.  Also, one new index $N+1$ will be added
for each child.

Hence, the first claim is satisfied by induction.  Also, neither of the entries
in position $N$ will be a left-to-right maximum, so the second claim is
satisfied by induction.
\end{proof}

\begin{example}
    The correspondence begins as shown in Figure~\ref{f:3n4}.
\end{example}

\begin{figure}[t]
    {\tiny \begin{tabular}{ll|ll}
321-avoiding & & 312-avoiding & \\
\hline 
213 & & 213 & \\
 & 3142 & & 2143 \\
 & 2143 & & 3241 \\
 & 2134 & & 2134 \\
312 & & 321 & \\
 & 4123 & & 4321 \\
 & 3124 & & 3214 \\
132 & & 231 & \\
 & 1423 & & 3421 \\
 & 1324 & & 2314 \\
231 & & 132 & \\
 & 3412 & & 1432 \\
 & 2413 & & 2431 \\
 & 2314 & & 1324 \\
123 & & 123 & \\
 & 2341 & & 1243 \\
 & 1342 & & 1342 \\
 & 1243 & & 2341 \\
 & 1234 & & 1234 \\
\end{tabular} }
\caption{Correspondence for $N = 4$}\label{f:3n4}
\end{figure}

\bigskip
\section{The remaining size three patterns}\label{s:conclusions}

The $123$-avoiding and $213$-avoiding interview orders give distinct best
choice problems but have distributions that are essentially decreasing so the
optimal strategy is to always to choose the next left-to-right maximum after
the first entry.  Although these are not so interesting from the game
perspective, they complete our analysis of all the size $3$ patterns.  For the
following results, we continue to let $\C(N,k)$ denote the ballot numbers.

\begin{proposition}
For the $123$-avoiding interviews, we have 
\[ \S_N(p) = \begin{cases}
\C(N-1,k-1) / \C(N-1,k-1) & \mbox{ if $p = (k-1)\ (k-2)\ \cdots 2\ 1\ k$ and $2 \leq k \leq N$ } \\
\frac{\C(N-1)}{\C(N)} & \mbox{ if $p = 1$ } \\
0 & \mbox{ otherwise. } \\
\end{cases} \]
Hence, the optimal strategy is to accept the second left-to-right maximum.
This succeeds with asymptotic probability $3/4$.
\end{proposition}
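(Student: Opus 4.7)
The plan is first to classify all eligible $123$-avoiding prefixes with positive strike probability, then to identify both the numerator and the denominator of $\S_N(p)$ with ballot numbers, and finally to deduce optimality and the asymptotic rate.

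First, I would argue that for $k \geq 2$ the only eligible $123$-avoiding prefix of size $k$ is $p = (k-1)(k-2)\cdots 2\,1\,k$. Eligibility gives $p_{k-1} < p_k$, and any further increasing pair $p_i < p_j$ with $i < j \leq k-1$ would combine with $p_k$ to form a $123$-instance; hence $p_1, \ldots, p_{k-1}$ must be strictly decreasing. For this $p$, every $123$-avoiding extension $\pi$ must satisfy $\pi_k = N$, because an entry $\pi_j > \pi_k$ with $j > k$ would yield the $123$-pattern $\pi_{k-1}\pi_k\pi_j$. Consequently every such $\pi$ is $p$-winnable, so the numerator equals the denominator. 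For the remaining eligible prefix $p = [1]$, deleting the opening entry when $\pi_1 = N$ bijects $p$-winnable permutations with $123$-avoiders of size $N-1$, giving $\S_N([1]) = \C_{N-1}/\C_N$.

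Next I would identify the common value of the numerator and denominator for $p = (k-1)\cdots 2\,1\,k$ with the ballot number $\C(N-1,k-1)$, by showing the counts satisfy the ballot recurrence and boundary conditions recorded in Section~\ref{s:os}. Splitting on the $(k+1)$-st entry of $\pi$: if that entry exceeds $\pi_k$, the continuation lies in the family indexed by $(N, k+1)$; if it lies below the existing values in positions $1, \ldots, k-1$, a $\check{p}$-style bijection in the spirit of Theorem~\ref{t:321_tree} removes the smallest value and shifts the count to rank $N-1$ with the same $k$. I expect this counting step to be the main obstacle, since one must verify that these two cases exhaust all $123$-avoiding continuations and that the bijection preserves the relevant prefix shape.

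Finally, to deduce the optimal strategy: every eligible positive-probability prefix other than $[1]$ contributes $\S_N = 1$, and every $123$-avoiding $\pi$ with $\pi_1 \neq N$ passes through exactly one of them (at the position of its second left-to-right maximum, which itself must equal $N$). Striking on the whole family $\{(k-1)\cdots 2\,1\,k : 2 \leq k \leq N\}$ therefore wins on all $\C_N - \C_{N-1}$ such permutations, while the only competing complete strike set strikes at $[1]$ and wins the $\C_{N-1}$ permutations with $\pi_1 = N$. Since $\C_N > 2\C_{N-1}$ for $N \geq 3$, the second-left-to-right-maximum rule is optimal, and its asymptotic success probability is $1 - \lim_{N \to \infty} \C_{N-1}/\C_N = 1 - 1/4 = 3/4$, using the ratio identity $\C_N/\C_{N-1} = 2(2N-1)/(N+1) \to 4$.
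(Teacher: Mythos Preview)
Your first paragraph (classifying the eligible $123$-avoiding prefixes and showing that every $123$-avoiding extension of $(k-1)\cdots 1\,k$ is forced to have $\pi_k = N$, hence numerator $=$ denominator) and your third paragraph (the second-left-to-right-maximum rule wins exactly when $\pi_1 \neq N$, giving success count $\C_N - \C_{N-1}$, the comparison $\C_N > 2\C_{N-1}$ for $N\geq 3$, and asymptotic $1 - 1/4 = 3/4$) are both correct.  Your route to the optimal strategy and its limit is in fact more direct than the paper's: the paper sums the individual ballot-number numerators $\sum_{j=1}^{N-1}\C(N-1,j) = \C(N,2)$ and then simplifies $\C(N,2)/\C_N = \tfrac{3N(N-1)}{2N(2N-1)}$, whereas you bypass the sum entirely by observing that the strike set wins precisely on the complement of $\{\pi : \pi_1 = N\}$.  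The two agree because $\C(N,2) = \C(N,1) - \C(N-1,0) = \C_N - \C_{N-1}$, but your version never needs to know what each individual denominator is.

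The one genuine gap is your second paragraph, where you try to verify that the common numerator/denominator equals the ballot number $\C(N-1,k-1)$.  You propose to split on whether $\pi_{k+1}$ exceeds $\pi_k$, but you have already established that $\pi_k = N$, so that branch is vacuous and the recurrence you sketch collapses rather than reproducing the ballot recursion.  A clean fix: after deleting $\pi_k = N$ you are counting $123$-avoiding permutations of $[N-1]$ whose first $k-1$ entries are strictly decreasing; the complement bijection $\sigma_i \mapsto N-\sigma_i$ sends these to $321$-avoiding permutations of $[N-1]$ with prefix flattening $[1\,2\cdots(k-1)]$, which Theorem~\ref{t:321_tree} already counts as $\C(N-1,k-1)$.  (The paper's own proof does not spell this identification out either; it simply uses the ballot numbers in the sum.)
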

\begin{proof}
Observe that a $123$-avoiding permutation can have at most two left-to-right
maxima, and $\S(p)$ is $0$ unless $p$ has the form of a decreasing sequence
followed by a left-to-right maximum.
That is, $p = (k-1)(k-2) \cdots 1 k$ with one prefix for each $1 \leq k \leq N$.

By rejecting the first interview candidate, we can capture all of the other
eligible strikes.  When we add these, we obtain total probability of success
\[ \frac{\C(N-1,N-1)+\C(N-1,N-2)+ \cdots +\C(N-1,1)}{\C(N)} = \frac{\C(N,2)}{\C(N)} = \frac{3N(N-1)}{2N(2N-1)} \]
which is more than the $\frac{\C(N-1)}{\C(N)}$ as we would obtain by accepting
$p = 1$.
\end{proof}

\begin{proposition}
For the $213$-avoiding interviews, we have 
\[ \S_N(p) = \begin{cases}
\frac{\C(N-1,k-1)}{\C(N,k)} & \mbox{ if $p = 12 \cdots k$ and $1 \leq k \leq N$ } \\
0 & \mbox{ otherwise. } \\
\end{cases} \]
Hence, the optimal strategy is accept the first or second left-to-right maximum.
This succeeds with asymptotic probability $1/4$.
\end{proposition}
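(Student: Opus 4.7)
The plan is to mirror the structure of the $123$-avoiding proposition above. First, I would show that $\S_N(p)=0$ unless $p=[12\cdots k]$. Indeed, if $p=\pi|_{[k]}$ is eligible and admits a $p$-winnable extension $\pi$ (so $\pi_k=N$), then any inversion $\pi_i>\pi_j$ with $i<j<k$ would pair with $\pi_k=N$ to form the forbidden pattern $\pi_i\pi_j\pi_k$. Hence the first $k-1$ entries must be increasing and $p=[12\cdots k]$.

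Next, I would compute both the denominator and numerator of $\S_N([12\cdots k])$ by decomposing $213$-avoiding permutations with an increasing prefix of length $k$. Fixing the prefix values $v_1<v_2<\cdots<v_k$, a short case analysis on how a $213$-pattern might straddle the prefix/suffix boundary shows three things: the remaining $N-k$ entries partition (by value) into blocks for the intervals $(0,v_1),(v_1,v_2),\ldots,(v_{k-1},v_k),(v_k,N+1)$; these blocks must appear in the suffix in strictly decreasing interval order (highest interval first); and within each block the arrangement is an arbitrary $213$-avoiding sequence. Summing over block sizes $w_1+\cdots+w_{k+1}=N-k$ then yields the $(k+1)$-fold Catalan convolution $\sum \C_{w_1}\C_{w_2}\cdots\C_{w_{k+1}}$, which a standard Catalan identity evaluates to the ballot number $\C(N,k)$. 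The numerator is computed by the same analysis with $v_k=N$ forced, which suppresses the final block and yields the $k$-fold convolution equal to $\C(N-1,k-1)$.

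Finally, since the eligible prefixes with nonzero strike probability form the chain $[1]\subset[12]\subset[123]\subset\cdots$, any antichain strike set contains at most one of them, and the global success probability is $\C(N-1,k-1)/\C_N$ for whichever $k$ is chosen. The ballot numbers $\C(N-1,k-1)$ are nonincreasing in $k$ with $\C(N-1,0)=\C(N-1,1)=\C_{N-1}$, so the maximum $\C_{N-1}/\C_N$ is attained precisely at $k=1$ (accept $\pi_1$, always the first left-to-right maximum) and $k=2$ (strike at prefix $[12]$). Since any $213$-avoiding $\pi$ with $\pi_1>\pi_2$ can admit no subsequent left-to-right maximum---else $\pi_1\pi_2\pi_j$ realizes $213$---the $k=2$ strategy coincides with ``accept the second left-to-right maximum whenever one exists.'' The asymptotic value $1/4$ is then the standard Catalan ratio limit $\C_{N-1}/\C_N\to 1/4$.

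The main technical hurdle will be the block decomposition of the $213$-avoiding suffix and justifying that no cross-block $213$-pattern can arise; the convolution evaluation, chain optimization, and asymptotics are all routine thereafter.
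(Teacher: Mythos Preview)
Your proposal is correct and complete, but it takes a more computational route than the paper for the strike-probability formula. The paper dispatches the formula in two lines: it asserts (as a known fact about $213$-avoiders, parallel to the ballot-number denominator established in Theorem~\ref{t:321_tree}) that the number of $213$-avoiding permutations with increasing prefix of length $k$ is the ballot number $\C(N,k)$, and for the numerator it simply observes that deleting the value $N$ from position $k$ is a bijection to $213$-avoiding permutations of $N-1$ with increasing prefix of length $k-1$, giving $\C(N-1,k-1)$ immediately. Your block decomposition into intervals $(v_{i-1},v_i)$ arranged in decreasing interval order, each internally $213$-avoiding, is a genuine structural result that the paper does not spell out; it gives a self-contained derivation via the $(k{+}1)$-fold Catalan convolution and the standard identity $[x^{N-k}]C(x)^{k+1}=\C(N,k)$. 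This buys you independence from any prior ballot-number enumeration, at the cost of the extra verification that no cross-block or prefix/suffix $213$-instance can arise (which you correctly flag as the main technical step). The chain argument and the $\C_{N-1}/\C_N\to 1/4$ asymptotics are handled identically in both.
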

\begin{proof}
Observe that any $213$-avoiding permutation that ends in a left-to-right maximum
must be increasing.  So, $\S_N(p) = 0$ if $p$ is not increasing.  Otherwise, $p =
12 \cdots k$, so $\S_N(p) = \frac{\C(N-1,k-1)}{\C(N,k)}$ (as the ballot numbers
count the total number of $213$-avoiding permutations under a given increasing
prefix, so removing the value $N$ gives a bijection to smaller rank).  These
prefixes all contain each other so we should pick the one with largest number of
wins.  As the numerators are decreasing, it is optimal to choose the first or
second one.
\end{proof}

\bigskip
\section*{Acknowledgements}

This project was supported by the James Madison University Program of Grants
for Faculty Assistance.  An early version of this research was also the subject
of an undergraduate research program mentored by the author at James Madison
University; we are grateful to Aaron Fowlkes for his keen contributions to that
work as well as to Stephen Lucas, Edwin O'Shea, and Laura Taalman for their
interest and helpful comments.  We also thank the anonymous referees, whose
comments improved this manuscript.



\providecommand{\bysame}{\leavevmode\hbox to3em{\hrulefill}\thinspace}
\providecommand{\MR}{\relax\ifhmode\unskip\space\fi MR }
\providecommand{\MRhref}[2]{%
  \href{http://www.ams.org/mathscinet-getitem?mr=#1}{#2}
}
\providecommand{\href}[2]{#2}

\end{document}